\documentclass[oneside,reqno]{amsart}
\usepackage[utf8]{inputenc}
\usepackage{amsmath,amssymb,amsthm, graphicx,float,url}
\usepackage[dvipsnames]{xcolor}
\usepackage{tikz-cd} 
\usepackage{thm-restate}
\usepackage{physics} 
\usepackage[hidelinks]{hyperref}

\usepackage[capitalize]{cleveref}
\crefname{subsection}{subsection}{subsections}
\usepackage[symbol]{footmisc}
\usepackage{nicefrac} 
\allowdisplaybreaks 

\usepackage{lineno}

\newcommand{\systemU}{(u_i)_m\xspace}

\newcommand{\ONF}{\ensuremath{\mathsf{NF}}\xspace}
\newcommand{\DOddGroup}{G}

\numberwithin{equation}{section}

\usepackage[a4paper]{geometry}
\usepackage{mathtools}
\usepackage{stackrel}

\newcommand{\bi}{\begin{itemize}}
	\newcommand{\ei}{\end{itemize}}
\newcommand{\be}{\begin{enumerate}}
	\newcommand{\ee}{\end{enumerate}}

\usepackage[english]{babel}
\usepackage{tabularx,gastex,hhline,rotating,enumerate,xspace}

\usepackage{tikz}
\usetikzlibrary{calc,decorations.pathmorphing,patterns,arrows,decorations.markings,positioning}
\usetikzlibrary{automata,chains,fit,shapes}

\renewcommand{\geq}{\geqslant} \renewcommand{\leq}{\leqslant}

\newenvironment{aw}{\noindent\color{blue} AW :  }{}

\newtheorem{theorem}{Theorem}[section]

\newtheorem{corollary}[theorem]{Corollary}
\newtheorem{lemma}[theorem]{Lemma}

\newtheorem{open}{Open question}[section]
\theoremstyle{definition}
\newtheorem{definition}[theorem]{Definition}
\newtheorem{remark}[theorem]{Remark}

\newtheorem{theoremx}{Theorem}

\newtheorem{propx}[theoremx]{Proposition}

\newcommand{\Aut}{\textnormal{Aut}}

\renewcommand{\geq}{\geqslant} \renewcommand{\leq}{\leqslant}  

\newcommand{\Z}{\mathbb Z}
\newcommand{\N}{\mathbb N}

\newcommand{\bbX}{\mathbb{X}}
\newcommand{\bbY}{\mathbb{Y}}
\newcommand{\Fin}{\mathrm{Fin}}
\newcommand{\Arb}{\mathrm{FinPres}}
\newcommand{\Abe}{\mathrm{Abel}}
\newcommand{\Hom}{\mathrm{Hom}}

\newcommand{\Epi}[2]{\operatorname{Epi}(#1,#2)}
\newcommand{\Sol}[1]{\operatorname{Sol}(#1)}

\newcommand{\cD}{\mathcal{D}}

\newcommand{\cR}{\mathcal{R}}

\newcommand{\cC}{\mathcal{C}}

\newcommand{\cG}{\mathcal{G}}

\newcommand{\cT}{\mathcal{T}}

\newcommand{\cN}{\mathcal{N}}

\newcommand{\cX}{\mathcal{X}}
\newcommand{\cY}{\mathcal{Y}}

\newcommand{\GL}{\mathrm{GL}}

\newenvironment{js}{\noindent\color{red} JS :  }{}

\newcommand{\MFd}{\mathfrak{d}}

\newcommand{\MFr}{\mathfrak{r}}

\newcommand{\compproblem}[3][]{%
	\par\vspace{0.125cm plus 0.1cm minus 0.05cm}\begin{tabularx}{\textwidth-2\parindent}{lX}%
		\if\relax\detokenize{#1}\relax%
		\else%
		\textnormal{\textbf{Constant:}}&#1\\%
		\fi%
		\textnormal{\textbf{Input:}}&#2\\%
		\textnormal{\textbf{Question:}}&#3\\%
	\end{tabularx}\vspace{0.125cm plus 0.1cm minus 0.05cm}\par%
}

\newcommand{\compalgo}[3][]{%
	\par\vspace{0.125cm plus 0.1cm minus 0.05cm}\begin{tabularx}{\textwidth-2\parindent}{lX}%
		\if\relax\detokenize{#1}\relax%
		\else%
		\textnormal{\textbf{Constant:}}&#1\\%
		\fi%
		\textnormal{\textbf{Input:}}&#2\\%
		\textnormal{\textbf{Output:}}&#3\\%
	\end{tabularx}\vspace{0.125cm plus 0.1cm minus 0.05cm}\par%
}

\newcommand{\NP}{\ensuremath{\mathsf{NP}}\xspace} %

\renewcommand{\P}{\ensuremath{\mathsf{P}}\xspace}
\newcommand{\PSPACE}{\ensuremath{\mathsf{PSPACE}}\xspace} 
\newcommand{\EXPSPACE}{\ensuremath{\mathsf{EXPSPACE}}\xspace} 

\newcommand{\gen}[1]{\left<\, \mathinner{#1} \,\right>}
\newcommand{\Gen}[2]{\left< \,\mathinner{#1} \mid \mathinner{#2}\,\right>}

\newcommand{\mcomm}[3]{\left[{#1},\kern.1em_{#2}\,\kern.1em {#3} \right]}
\newcommand{\smcomm}[2]{\left[_{#1}\,\kern.1em {#2} \right]}

\begin{document}

\title{Epimorphism testing  with dihedral  targets}

\author{Murray Elder}
\address{School of Mathematical and Physical Sciences, University of Technology Sydney, Ultimo NSW 2007, Australia}  
\email{murray.elder@uts.edu.au}
%
%
%
\author{Jerry Shen}
\address{School of Mathematical and Physical Sciences, University of Technology Sydney, Ultimo NSW 2007, Australia} 
\email{qing.shen-1@uts.edu.au}

\author{Armin Wei\ss}
\address{ Institut f\"ur Formale Methoden der Informatik, Universit\"at Stuttgart, 
70569 Stuttgart, Germany}
\email{armin.weiss@fmi.uni-stuttgart.de}
%

\date{\today}

\keywords{epimorphism problem; \NP-complete; dihedral group; equations over groups}

\subjclass[2020]{20F10, 20F65, 68Q17}

\begin{abstract}
We consider the problem of deciding whether or not there is an epimorphism from the class of finitely presented groups to a fixed finite group.
We show this problem is $\mathsf{NP}$-complete when the target is a non-nilpotent dihedral group. 
This complements work of Kuperberg and Samperton (2018, Geom. Topol.) who showed the same result when the target is a fixed finite non-abelian simple group. 
We also include a 
survey of complexity and reduction results  when the source and target are classes of finitely presented groups.
\end{abstract}
\maketitle

\section{Introduction}

In this article we consider the complexity of the following decision problem for classes of finitely generated groups $\cD, \cT$.
The (uniform) \emph{epimorphism problem} from $\cD$ to $\cT$, denoted $\Epi{\cD}{\cT}$, is the following decision problem.
\compproblem[]{Finite descriptions 
for groups $G \in \cD$ and $H \in \cT$}{Is there 
an epimorphism 
from $G$ to $H$?}
Note that, in the category of groups, an epimorphism is a surjective homomorphism.
We refer to $G\in \cD$ as the \emph{domain group} and $H\in \cT$ is the \emph{target group} for the problem. 
If $\cT=\{H\}$ is a singleton, we write $\Epi{\cD}{H}$ for the epimorphism problem from a class $\cD$ to the fixed group $H$, in which case the input is just a finite description for a group $G\in\cD$.

When the target is a finite group, 
epimorphism is easily seen to be 
in \NP (see \cref{lem:epi_finite}). 
Kuperberg and Samperton \cite{Kuperberg} considered the case of epimorphism from certain 3-manifold groups to finite non-abelian simple groups, in the context of more general questions. 
It follows from their work that the epimorphism problem from a finitely presented group to a fixed finite non-abelian simple group is \NP-hard (see Subsection~\ref{subsec:Kuperberg}), and therefore \NP-complete.
Here we show the same result applies when the target is a non-nilpotent finite dihedral group.

\begin{theoremx}\label{thm:MainDihedral}
   The epimorphism problem from finitely presented groups to a finite non-nilpotent dihedral group is \NP-hard (and therefore \NP-complete).
\end{theoremx}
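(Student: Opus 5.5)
Dihedral group $D_n$ of order $2n$ is nilpotent iff $n$ is a power of $2$. So fix $n$ with an odd prime divisor $p$. The plan is to reduce from an \NP-complete problem, and the natural choice is graph $3$-colouring, or more generally $p$-colouring / solvability of systems over $\Z_p$. Earlier results in the paper can be used here: there is an \NP-hardness result for solving systems of equations over a fixed dihedral group, and \NP-completeness of $k$-colourability for $k\ge 3$.

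\textbf{Step 1: encoding colourings.} Write $D_n=\Gen{r,s}{r^n,s^2,(sr)^2}$. The reflections are $sr^i$, $i\in\Z_n$, and $sr^i\,sr^j=r^{j-i}$. Given a graph $\Gamma$ with vertices $v_1,\dots,v_k$ and edges $E$, I would build a finitely presented group $G_\Gamma$ with generators $x_1,\dots,x_k$ and relators $x_i^2$. For each edge $\{v_i,v_j\}$ I add a relator forcing $x_ix_j$ to have order dividing $p$ (or exactly $(x_ix_j)^p$). A homomorphism sending each $x_i$ to a reflection $sr^{a_i}$ then satisfies the edge relators exactly when $p(a_j-a_i)\equiv 0 \pmod n$. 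To get a colouring condition of the form $a_i\neq a_j$, I would instead pick relators so that adjacent vertices are forced to have \emph{distinct} images. I expect to use a gadget on extra generators, exploiting that in $D_p$ the product of two distinct reflections generates the rotation subgroup.

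\textbf{Step 2: handling rotations and surjectivity.} There are two problems. First, the $x_i$ may map to rotations or to the identity rather than reflections. Second, the map must be onto. To force reflections, I would add a generator $t$ and relators making each $x_i t$ a fixed object. A cleaner route may be to reduce via the quotient $D_n\to D_p$, which has kernel a rotation subgroup. It suffices to prove hardness for $\Epi{\Arb}{D_p}$ and then lift to $D_n$ by taking a free product or direct product with a group that surjects onto the relevant part. Since $D_n$ maps onto $D_p$ and $D_p$ embeds as a retract when $\gcd$ conditions hold, I would check that adding a free factor handles the remaining cyclic part. For surjectivity onto $D_p$, the image is generated by two distinct reflections, so any nonconstant assignment $a$ gives an epimorphism. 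I would ensure $\Gamma$ has an edge, so that a proper colouring is nonconstant.

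\textbf{Step 3: the core reduction for $D_p$.} Reduce from the \NP-complete problem of solvability of systems of equations $a_i - a_j \neq 0$ over $\Z_p$, i.e.\ $p$-colourability, which is \NP-complete for $p\ge3$. The inequality $a_i\ne a_j$ is equivalent to $x_ix_j$ generating the rotation subgroup of $D_p$. Inequalities are not directly expressible by relators, and this is the main obstacle. I would try the standard trick: a homomorphism is a solution of a positive system, so instead encode colourings via linear equations. A $p$-colouring corresponds to a map $V\to\Z_p$ with $a_i-a_j\in\Z_p^{*}$. Introduce, for each edge, an auxiliary generator $y_e$ with the relator $y_e^2$. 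Then impose relators expressing $r^{a_j-a_i}$ as a conjugate of a fixed rotation: $x_ix_j = z_e\, (x_1x_0)\, z_e^{-1}$-type relators with auxiliary $z_e$. Conjugation in $D_p$ only realises $\pm$, so I expect to need instead powers, via $x_ix_j=(u)^{c}$ with $u$ of order $p$ fixed and a disjunction over $c$. Disjunctions can be simulated by nonabelian choice among reflections. If this fails, I would fall back on the paper's earlier \NP-hardness for equation systems over $D_n$, converting an equation system into a presentation with constants replaced by new generators $\sigma,\rho$ satisfying the defining relations of $D_n$. Epimorphisms can then be normalised by an automorphism, since $\Aut(D_n)$ acts transitively on generating pairs (reflection, generating rotation). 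Surjectivity is then automatic, and this normalisation argument is the step I would verify most carefully.
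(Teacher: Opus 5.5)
There is a genuine gap. Neither of your routes to hardness for a single dihedral target is completed, and your passage from $D_{2p}$ to a general $D_{2n}$ is not valid as proposed. (I use the paper's convention that $D_{2n}$ has order $2n$.)

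\textbf{The reduction for a single dihedral target.} The colouring route stops exactly at the obstacle you name. Relators only impose equalities, and you give no mechanism that forces the $x_i$ to be reflections or simulates the disjunction $a_i\neq a_j$. The fallback replaces the constants by generators $\sigma,\rho$ satisfying the defining relations and keeps the variables as free generators. This is not a correct reduction, because nothing forces $(\psi(\sigma),\psi(\rho))$ to be a generating pair. Surjectivity can be supplied by the variable generators, so the normalisation by $\Aut(D_{2n})$ is unavailable. Concretely, for odd $n$ take the unsolvable system $\{s,\ X_1^2,\ X_2^n,\ X_1X_2X_1X_2\}$. In the resulting group $\sigma=1$, hence $\rho^2=\rho^n=1$ and so $\rho=1$. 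The group is therefore $\Gen{X_1,X_2}{X_1^2,X_2^n,(X_1X_2)^2}\cong D_{2n}$, which maps onto $D_{2n}$. Your ``surjectivity is automatic'' only covers the easy direction.

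The missing idea is the paper's way of stopping the variables from contributing to surjectivity. For odd $n$, every element equals $\alpha_0({}^{\alpha_1}t)\cdots({}^{\alpha_n}t)$ with $\alpha_i\in\{1,s\}$ (\cref{lem:UsefulConjDn}). So each $X_j$ is replaced by such a word in new generators $g_{i,j}$, which are required to be involutions commuting with each other and with $a$. Then $\gen{a,g_{i,j}}$ is abelian, so surjectivity forces $\psi(d)$ to be a nontrivial rotation and $\psi(a)$ a reflection. By \cref{lem:ODDD2nCommute}, each $\psi(g_{i,j})\in\{1,\psi(a)\}$. Hence $\psi(a),\psi(d)$ alone generate $D_{2n}$ and can be normalised by an automorphism (\cref{lem:d2_pt1_restricted_epi}). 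This is precisely the conjugation device you discarded because it ``only realises $\pm$'': a product of $n$ conjugates ${}^{g_{i,j}}d$, each mapping to $t^{\pm1}$, realises every rotation.

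\textbf{The lift from $D_{2p}$ to $D_{2n}$.} This step is not established and fails in general. A free factor mapping onto the kernel of $D_{2n}\to D_{2p}$ can only help when that kernel is complemented among the rotations, which is your gcd condition. For example, $n=pm$ with $m$ odd and coprime to $p$ does work, using $G\ast C_m$. It fails for $n=9$. There the kernel $\gen{t^3}$ of $D_{18}\to D_6$ is the Frattini subgroup of $D_{18}$, so a factor mapping into it is irrelevant to surjectivity. Thus $G\twoheadrightarrow D_6$ does not yield an epimorphism onto $D_{18}$; take $G=D_6$. For even $n$ the reverse direction breaks, since the cyclic factor can hit reflections: $C_6\ast C_2\twoheadrightarrow D_{12}$ while $C_6\not\twoheadrightarrow D_6$.

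The paper avoids this step entirely. The construction above works for every odd $n>1$, not only primes. The case of even $n$ not a power of $2$ uses only the \emph{central} quotient $D_{4m}/Z(D_{4m})\cong D_{2m}$ together with \cref{lem:QuotientCenter}, where a central kernel is absorbed by adding central generators; your plan has no substitute for this. (Membership in \NP, needed for the parenthetical claim, is \cref{lem:epi_finite} and is routine.)
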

Recall that a dihedral group $D_{2n}$ of order $2n$ is nilpotent if and only if $n$ is a power of $2$.  
Let $\Arb$ denote the class of finitely presented groups, and $Z(G)$ the center of a group $G$.
\begin{propx}\label{prop:CenterQuotient}
Let $G\in \Arb$ with 
$Z(G)$ finitely generated. 
      If $\Epi{\Arb}{G/Z(G)}$ is \NP-hard, then $\Epi{\Arb}{G} $ is \NP-hard.
\end{propx}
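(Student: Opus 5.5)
The plan is a polynomial-time many-one reduction from $\Epi{\Arb}{G/Z(G)}$ to $\Epi{\Arb}{G}$. It replaces the input group by a central extension in which every relator becomes central rather than trivial. Write $Z=Z(G)$ and fix generators $c_1,\dots,c_k$ of $Z$; here $k$ is a constant, since $G$ is fixed and $Z$ is finitely generated. Given a finite presentation $K=\Gen{X}{R}$, I would output
\[
K'' \;=\; \Gen{X,\, t_1,\dots,t_k}{\,[r,x]\ (r\in R,\ x\in X),\ [t_i,t_j],\ [t_i,x]\ (x\in X)\,}.
\]
This is $K'\times\Z^k$, where $K'=\Gen{X}{[r,x]\ (r\in R, x\in X)}$, and $K'$ is $F(X)/[N,F(X)]$ for $N$ the normal closure of $R$. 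The presentation has size $O(|X|\,|R|\cdot\max|r| + k|X| + k^2)$, so it is polynomial in the input. The key property is that in $K''$ each $r\in R$ commutes with every generator, so each $r$ is central in $K''$.

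\emph{Forward direction.} Suppose $\varphi\colon K\to G/Z$ is an epimorphism.
\begin{enumerate}
\item For each $x\in X$, choose a lift $g_x\in G$ of $\varphi(x)$. This defines $\psi\colon F(X)\to G$, and $\psi(r)\in Z$ for every $r\in R$ because $\varphi(r)=1$.
\item Hence $\psi([r,x])=1$, so $\psi$ factors through $K'$.
\item Send $t_i\mapsto c_i$. Each $c_i$ is central and the $c_i$ commute with each other, so all relators of $K''$ are respected and we get a homomorphism $K''\to G$.
\item Its image $H$ satisfies $HZ=G$, because the $X$-part already surjects onto $G/Z$. The image also contains all of $Z$ via the $t_i$. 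So the image is all of $G$.
\end{enumerate}

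\emph{Converse.} Suppose $\theta\colon K''\to G$ is an epimorphism.
\begin{enumerate}
\item Each $r\in R$ is central in $K''$, so $\theta(r)$ is central in $\theta(K'')=G$, i.e.\ $\theta(r)\in Z$.
\item Likewise each $t_i$ is central in $K''$, so $\theta(t_i)\in Z$.
\item Therefore the composite $K''\to G\to G/Z$ is surjective and kills every $r$ and every $t_i$.
\item It thus factors through $K''/\langle\!\langle R, t_1,\dots,t_k\rangle\!\rangle\cong \Gen{X}{R}=K$. This gives an epimorphism $K\to G/Z$.
\end{enumerate}

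\emph{Main obstacle.} The step that needs care is the converse, specifically making sure every relator lands in $Z(G)$ under an arbitrary epimorphism. Two design choices handle this. First, only the relators are made central, not the generators. Second, the $\Z^k$ factor is attached as a direct factor rather than a free factor, so that it cannot spoil the centrality of the $r$'s while still supplying the elements of $Z$ needed for surjectivity in the forward direction. With this construction the reduction is correct, and \NP-hardness transfers.
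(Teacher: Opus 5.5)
Your reduction is correct and is essentially the paper's. The paper also makes every relator central and adjoins $k$ central generators that map to generators of $Z$, and it argues both directions as you do: lifting through $G\to G/Z$ and using $HZ=G$ with $Z\subseteq H$ for surjectivity, and factoring through the quotient by the central subgroup for the converse. The only cosmetic difference is that the paper introduces a new letter $x_r$ for each relator, with relators $r\,x_r^{-1}$ and $[a,x_r]$, instead of writing the commutators $[r,x]$ directly; this keeps the added relators short but is otherwise equivalent.
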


Using this and observing that $D_{4n}/Z(D_{4n})\cong D_{2n}$, the proof of \cref{thm:MainDihedral} reduces to showing $\Epi{\Arb}{D_{2n}}$ is \NP-hard for odd $n>1$.

Remeslennikov
\cite{Remeslennikov} proved that for $\cN$ the class of non-abelian nilpotent groups, the epimorphism problem from $\cN$ to $\cN$
is undecidable, via Hilbert's 10th problem.
Applying the work of Razborov \cite[Theorem 3]{Razborov1985} on equations in free groups, the epimorphism problem from finitely presented groups to finitely generated free groups is decidable, but without any known complexity bounds (see \cref{sec:otherResults}). 
Friedl and L\"oh \cite{FriedlLoh}
considered the epimorphism problem from finitely presented groups to virtually abelian groups, proving that the problem is decidable when the target is either a virtually cyclic group 
or the direct product of an abelian group and a finite group.

We conclude this paper by briefly surveying additional 
results for the epimorphism problem for groups where at least one of the source or target is a class of infinite finitely generated groups (noting that the problem is also interesting when both source and target are (classes of) finite groups, for example recent work of Grochow, Srivastava and Thakkar \cite{GrochowICALP})
by collecting
together known results 
and  some straightforward consequences of known results.

\begin{propx}\label{prop:ObserveFacts}
    The epimorphism problem from finitely presented groups 
    to 
    \begin{enumerate}
               \item finite rank free groups is decidable (via \cite{Razborov1985}, see \cref{lem:epiFree})

\item a fixed non-abelian finite simple group is \NP-complete \cite{Kuperberg} 
    
       \item virtually cyclic groups, direct products of a free abelian group with a finite group, and semi-direct products of a free abelian group $N$ and a finite group $Q$ where the action of
$Q$ on $N$ factors through $\{\pm\text{Id}\}$, are \NP-complete \cite{ESW-virtAbel} (where decidability for  virtually cyclic and direct products  was established by \cite{FriedlLoh})
  \item  finitely generated abelian groups  is in \P 
       (via \cite{kannan1979}, see \cref{lem:ABEL-P}).
    \end{enumerate}
    Furthermore, \be\setcounter{enumi}{4}
     \item if $\cC$ is the class of non-abelian nilpotent groups, then $\Epi{\cC}{\cC}$ is undecidable \cite{Remeslennikov} 
       \item if $\cD$ is any class of groups which contains at least one finitely generated group, then $\Epi{\cD}{\Arb}$ is undecidable

       \item 
       $\Epi{\Arb}{\Arb}$ is recursively enumerable.

\ee
\end{propx}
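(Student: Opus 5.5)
The statement collects seven facts. For most of them the plan is to cite the indicated source and, where needed, add a short reduction. I would treat them in the listed order and put the real work into items (1), (4), (6) and (7).

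For (1), let $F_k$ be free of rank $k$ and $G=\Gen{x_1,\dots,x_m}{r_1,\dots,r_\ell}$. An epimorphism $G\to F_k$ is a tuple $(u_1,\dots,u_m)\in F_k^m$ satisfying two conditions. First, it solves the system of equations $r_j(u_1,\dots,u_m)=1$. Second, the $u_i$ generate $F_k$. The second condition is first-order: there must exist words $w_1,\dots,w_k$ in the $u_i$ with $w_t(u)=a_t$ for the basis $a_t$. However, it is not directly an equation over fixed unknowns.

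So I would use Razborov's description \cite[Theorem 3]{Razborov1985} of the full solution set of a system of equations over $F_k$. This is a finite collection of parametrised families, obtained by composing automorphisms of free groups with fundamental sequences. Surjectivity of a solution means its image is all of $F_k$, and it suffices to test whether some family contains a surjective solution. Each family's image is the image of a homomorphism from a finitely generated group computed effectively, and Nielsen/Stallings folding decides whether a finitely generated subgroup equals $F_k$. I expect this step to be the main obstacle: one must argue that the surjective specialisations can be detected effectively from Razborov's description rather than by searching over infinitely many parameter values. My first attempt is a semi-decision in both directions. Positive instances are recursively enumerable by enumerating tuples and checking the relations together with folding. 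For negative instances, the finite Razborov description certifies that no family contains a surjective specialisation.

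Items (2), (3) and (5) are cited. For (2), I would combine \cref{lem:epi_finite} with the hardness result of \cite{Kuperberg} as discussed in Subsection~\ref{subsec:Kuperberg}. Items (3) and (5) are taken directly from \cite{ESW-virtAbel,FriedlLoh} and \cite{Remeslennikov}.

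For (4), any homomorphism $G\to A$ with $A$ abelian factors through $G^{ab}=\Z^m/R$, where $R$ is the span of the exponent-sum vectors of the relators. The task is then to decide whether $\Z^m/R$ surjects onto a given finitely generated abelian group $A$. Using Smith normal forms, computable in polynomial time by \cite{kannan1979}, write both groups as invariant-factor decompositions. Surjection holds if and only if, for every prime $p$ and every $e\geq 1$, the number of invariant factors of $G^{ab}$ divisible by $p^e$ (counting $\Z$ factors) is at least the corresponding number for $A$, and the free rank of $G^{ab}$ is at least that of $A$. I would phrase this as a comparison of divisibility sequences of invariant factors, which avoids factoring integers and so keeps the check polynomial.

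For (6), take a finitely generated $G\in\cD$ with finite generating set $S$. Given a finite presentation $P$, form $P*\Gen{S}{\emptyset}$. There is then an epimorphism onto the free product $P$ exactly when a certain decision about $P$ holds. More simply, I would reduce the triviality problem: $G$ surjects onto the group of $P$ iff $P$ is trivial, once $P$ is replaced by a group that is trivial or else has more generators than $G$ can hit. Concretely, I would use Adian–Rabin: the property ``is a quotient of $G$'' is Markov-type, since $\Z^{|S|+1}$ is finitely presented and not a quotient of $G$, while the trivial group is a quotient of $G$ and embeds in any group. Hence it is undecidable.

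For (7), enumerate all tuples of words $(u_i)$ in the target generators assigned to the source generators. For each tuple, enumerate consequences of the target relators to verify two things: that each source relator maps to $1$, and that each target generator equals a word in the $u_i$. Each check is r.e., and dovetailing over all tuples gives a semi-decision procedure.
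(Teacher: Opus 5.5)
Items (2), (3), (5) and (7) follow the paper. Your prime-power criterion in (4), with the $\Z$ factors of $A$ also counted, is equivalent to the divisibility condition of \cref{lem:epi_abe_tech_lem}. The genuine problems are in (6) and (1).

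In (6) the Adian--Rabin step fails. A Markov property requires a finitely presented $G_-$ that does not \emph{embed} in any finitely presented group with the property. It is not enough that $G_-$ itself lacks the property. Your $\Z^{|S|+1}$ does not qualify in general. It embeds in $\Z^{|S|+1}\rtimes C_{|S|+1}$, with $C_{|S|+1}$ cyclically permuting coordinates. That group is finitely presented and $2$-generated, hence a quotient of $G$ whenever $G$ maps onto $F_2$ (e.g.\ $G=F_2$). More generally, every finitely presented group embeds in a finitely presented $2$-generated group (Higman--Neumann--Neumann). So for such $G$ the property ``is a quotient of $G$'' is not Markov at all, and Adian--Rabin gives nothing. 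Your first, vague idea is the one that works, and the paper makes it precise with Grushko's theorem. For finitely presented $H$, let $K$ be the free product of $|S|+1$ copies of $H$. Then $m(K)=(|S|+1)\,m(H)$ while $m(G)\leq |S|$, so $G\twoheadrightarrow K$ if and only if $H$ is trivial. A presentation of $K$ is computable from one of $H$, so this reduces the undecidable triviality problem to $\Epi{\cD}{\Arb}$.

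In (1) the negative half of your procedure is unsupported. That Razborov's finite description ``certifies'' the absence of surjective specialisations is exactly what needs proof. A family has infinitely many specialisations, so there is no single finitely generated image subgroup to fold. Asking whether some homomorphism from the group through which a family factors is onto $F_k$ is the original question again. The missing idea is that you never need a solution whose image is literally $F_k$. Subgroups of free groups are free, and a free group of rank $\mathfrak{r}\geq k$ maps onto $F_k$ by killing all but $k$ basis elements. Hence $G\twoheadrightarrow F_k$ if and only if the system $r_j(X_1,\dots,X_m)=1$ has a solution generating a subgroup of rank at least $k$. Equivalently, the rank of the system is at least $k$, and computing this rank is exactly what Razborov's theorem (\cref{thm:Raz}) provides. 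This is the paper's proof of \cref{lem:epiFree}, and it removes the obstacle you flagged.
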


Items (6) and (7) of the proposition
are shown in \Cref{subsec:Remaining}.

\subsection{Notation and basic facts}\label{subsec:Notation}

\subsubsection*{Words}
Let $X$ be a set. We call a finite sequence 
$(x_1,\dots, x_n)$ with $x_i\in X$ a \emph{word} over $X$, and denote it as $x_1\cdots x_n$. 
The set of all words over $X$ is denoted $X^\ast$.
The notation $u(X) = u(x_1,\dots, x_n)$ stands for a word $u$ over the letters $x_1,\dots, x_n$. 
If $Y = \{y_1,\dots,y_p\}, Z=\{z_1,\dots, z_q\}$ are sets, then we may write $u(Y,z_1,\dots,z_q)=u(y_1,\dots,y_p,z_1,\dots,z_q)=u(Y,Z)$.

For any set $X$ we let 
 $X^{-1}=\{x^{-1}\mid x\in X\}$ be a set of letters with $X\cap X^{-1}=\emptyset$.
If $X$ is a set, $H$ is a group
and $\psi\colon X\to H$ is a set map, we define the {induced} monoid homomorphism
from $(X\cup X^{-1})^\ast$ to $H$ by \[\psi(x_{1}^{\epsilon_1}\cdots x_{s}^{\epsilon_s})=\psi(x_{1})^{\epsilon_1}\cdots \psi(x_{s})^{\epsilon_s} \]
where $x_i\in X$ and $\epsilon_i=\pm 1$. 

\subsubsection*{Groups}
We use the notation $1_G$ or $1$ to denote the identity element of a group $G$, $[a,b]:= aba^{-1}b^{-1}$ for the {commutator} of two elements $a,b$ of $G$, and $[G,G]$ the commutator subgroup of $G$ (the subgroup consisting of all products of commutators of elements of $G$). For a group $G$ and two elements $a,b \in G$, ${}^ba = bab^{-1}$
denotes the {conjugation} of $a$ by $b$. If $u,v$ are two different ways to represent the same element of $G$, we write $u=_G v$.

We use $C_\infty$ to denote  the infinite cyclic group, $C_n$ for the cyclic group of order $n\in \N_+$, $A_n$ for  the alternating group on $n$ letters, and $D_{2n}$ for  the dihedral group of \emph{order} $2n$.
Recall that the \emph{center} of a group $G$, denoted $Z(G)$, is the set of $g\in G$ which commute with all elements in $G$.

\subsubsection*{Presentations}
A group $G\in \Arb$ is given by a finite presentation $\Gen{X}{R}$ where $X$ is a finite set and each $r\in R$ is a word over $X\cup X^{-1}$. We do not assume $X$ is a subset of $G$, so for example we may have $x,y\in X$ with $x=_Gy$, and we do not assume $G$ has decidable word problem, so, \emph{a priori}, given a presentation $\Gen{X}{R}$, we do not assume the existence of an  algorithm to determine whether $x=_Gy$ for $x,y\in X$.

Recall the following standard results. 

\begin{lemma}[von Dyck's lemma
]\label{lem:vonD}
    If $G$ is presented by $\displaystyle
    \Gen{g_1, \dots, g_n}{r_1, \dots, r_m}$
    where $r_i=r_i(g_1,\dots, g_n)$, and $\psi\colon \{g_1,\dots, g_n\}\to H$ is a set map to a group $H$, then the induced monoid homomorphism $\psi\colon \{g_1^{\pm 1}, \dots, g_n^{\pm 1}\}^\ast\to H$ defines a homomorphism from $G$ to $H$ if and only if $r_i(\psi(g_1),\dots,\psi(g_n))=_{H}1_H$  for $1\leq i\leq m$.
\end{lemma}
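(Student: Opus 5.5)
The statement is von Dyck's lemma, so the plan is to go through the universal property of free groups. Let $F$ be the free group on $\{g_1,\dots,g_n\}$. Then $G = F/N$, where $N$ is the normal closure of $\{r_1,\dots,r_m\}$ in $F$. The set map $\psi$ extends uniquely to a group homomorphism $\tilde\psi\colon F\to H$. The induced monoid homomorphism on $\{g_1^{\pm1},\dots,g_n^{\pm1}\}^\ast$ factors as the natural map of words onto $F$ followed by $\tilde\psi$. This holds because $\tilde\psi$ respects free reduction: inserting or deleting $g_ig_i^{-1}$ or $g_i^{-1}g_i$ does not change the image, since $\psi(g_i)\psi(g_i)^{-1}=1_H$. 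So "the induced monoid homomorphism defines a homomorphism from $G$" should be read as "$\tilde\psi$ descends to a well-defined map $\bar\psi\colon F/N\to H$ with $\bar\psi(wN)=\psi(w)$". Once well defined, $\bar\psi$ is automatically a homomorphism, because $\tilde\psi$ is.

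For the forward direction, suppose $\psi$ gives a well-defined homomorphism $G\to H$. Each $r_i$ represents $1_G$, as does the empty word. Well-definedness therefore forces $\psi(r_i)=\psi(\text{empty word})=1_H$, that is, $r_i(\psi(g_1),\dots,\psi(g_n))=_H1_H$.

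For the converse, suppose each $r_i(\psi(g_1),\dots,\psi(g_n))=_H1_H$. Then every $r_i$ lies in $\ker\tilde\psi$. Since $\ker\tilde\psi$ is a normal subgroup of $F$, it contains the normal closure $N$. By the first isomorphism theorem (equivalently, the universal property of quotients), $\tilde\psi$ factors through $F/N=G$ as $\bar\psi(wN)=\tilde\psi(w)$. Concretely, if two words $u,v$ satisfy $u=_Gv$, then $uv^{-1}\in N\subseteq\ker\tilde\psi$, so $\psi(u)=\psi(v)$, and $\bar\psi$ is a well-defined homomorphism.

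There is no real obstacle here. The only point needing care is the bookkeeping between words, the free group $F$, and $G$. In particular, one should check that the monoid map on words is compatible with free reduction, so that it genuinely passes to $F$ before passing to $G$.
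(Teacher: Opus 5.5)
Your proof is correct. The paper gives no proof of its own and simply recalls this lemma as a standard result; your argument (extend $\psi$ to the free group on $g_1,\dots,g_n$, note that the normal closure of the relators lies in the kernel, and factor through the quotient) is the standard proof, and it rests on the universal property of quotients, which the paper records immediately afterwards as \cref{lem:univQuotient}.
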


\begin{lemma}[Universal property of quotients]\label{lem:univQuotient}
 Let $N$ be a normal subgroup of a group $G$, with $\pi\colon G\to G/N$  the natural quotient map.  
If $\psi\colon G\to K$  is a homomorphism and $N\subseteq \ker\psi$ 
then there exists a unique homomorphism
$\widetilde\psi\colon G/N\to K$ such that $\widetilde\psi\circ \pi=\psi$.
Moreover, if $\psi$ is surjective, then so is $\widetilde\psi$.

\end{lemma}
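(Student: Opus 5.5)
The plan is to define $\widetilde\psi$ directly on cosets and then check the required properties one at a time. For $gN\in G/N$ I set
\[
\widetilde\psi(gN) := \psi(g).
\]
This forces $\widetilde\psi\circ\pi=\psi$, because $\pi(g)=gN$. The real content is showing that the formula does not depend on the choice of coset representative.

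\textbf{Well-definedness.} Suppose $gN=hN$. Then $h^{-1}g\in N\subseteq\ker\psi$, so
\[
\psi(h)^{-1}\psi(g)=\psi(h^{-1}g)=1_K,
\]
and hence $\psi(g)=\psi(h)$. This is the only place where the hypothesis $N\subseteq\ker\psi$ is used, and I regard it as the main (if mild) obstacle. Normality of $N$ is needed only so that $G/N$ is a group with multiplication $(gN)(hN)=ghN$.

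\textbf{Homomorphism.} Using that multiplication rule,
\[
\widetilde\psi\bigl((gN)(hN)\bigr)=\widetilde\psi(ghN)=\psi(gh)=\psi(g)\psi(h)=\widetilde\psi(gN)\,\widetilde\psi(hN).
\]

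\textbf{Uniqueness.} Let $\chi\colon G/N\to K$ be any homomorphism with $\chi\circ\pi=\psi$. Since $\pi$ is surjective, every element of $G/N$ has the form $\pi(g)$, and
\[
\chi(\pi(g))=\psi(g)=\widetilde\psi(\pi(g)).
\]
So $\chi=\widetilde\psi$.

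\textbf{Surjectivity.} The image of $\widetilde\psi$ is $\{\psi(g)\mid g\in G\}=\psi(G)$. If $\psi$ is surjective, this image is all of $K$, so $\widetilde\psi$ is surjective as well.
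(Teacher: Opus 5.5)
Your proof is correct: well-definedness from $N\subseteq\ker\psi$, the homomorphism property, uniqueness via surjectivity of $\pi$, and surjectivity of $\widetilde\psi$ from $\widetilde\psi(G/N)=\psi(G)$ are all handled properly. The paper gives no proof of this lemma and simply recalls it as a standard result; your argument is the usual textbook one.
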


\subsubsection*{Complexity}
We assume the reader is familiar with the complexity classes of $\P$ and $\NP$.
A decision problem $A$ is \emph{polynomial-time reducible} to a decision problem $B$ if there exists a polynomial-time computable function $f$ such that, for every instance $x$, the answer to $A$ on input $x$ is ``yes'' if and only if the answer to $B$ on input $f(x)$ is ``yes''. 
Moreover, a problem is \emph{\NP-hard} if every problem in \NP is reducible to it in polynomial time, and a decision problem is \emph{\NP-complete} if it is both in \NP and \NP-hard. 

When a finite presentation $\Gen{X}{R}$ is given as input, we assume the sets $X,R\subseteq (X\cup X^{-1})^*$ are given 
by listing letters and words in unary, so that the size of the input is $|X|+\sum_{r\in R}|r|$. Integers are assumed to be given in binary (for example, if the input is a free group of rank $d\in \N_+$).

\subsubsection*{Equations}
Let $\bbX = \{X_1,X_1^{-1},\dots,X_n,X_n^{-1}\}$.
An \emph{equation} over a group $G$ is a word
\begin{equation*}
    u(g_1,\dots,g_s,\bbX) 
\end{equation*}
where $g_i\in G$ for $i\in[1,s]$ are called \emph{constants} and $\bbX$ are called
\emph{variables}.
A \emph{system of equations} $\systemU$ is a finite list of equations $
  u_i(g_1,\dots,g_s,\bbX)$ for $i\in[1,m]$.
A \emph{solution} to a system of equations $\systemU$
is a map $\sigma\colon \bbX \to G$ of the form $\sigma\colon X_i \mapsto h_i, X_i^{-1} \mapsto h_i^{-1}$ for some $h_i \in G$, $i\in[1,n]$ so that 
\[u_i(g_1,\dots,g_s,\sigma(X_1),\sigma(X_1^{-1}), \dots, \sigma(X_n),\sigma(X_n^{-1}))=_G 1_G \ \ \textnormal{for all} \ \ i\in[1,m].\] 
A system of equations \emph{without constants} is a list of equations of the form   $u_i(\bbX)$ for $i\in[1,m]$.
Note that 
if $G$ is a finitely generated group with finite inverse-closed generating set 
$ \{y_1,\dots,y_s\}$, we may write any equation over $G$ as $ u(y_1,\dots,y_s,\bbX)$. 

For a group $H$ let $\Sol{H}$ denote the problem of deciding whether a system of equations over $H$ has a solution.
Goldmann and Russell proved the following.
\begin{theorem}[{\cite[Theorem 3]{GOLDMANN2002253}}] \label{thm:eqn_d2n_NPcomp}
    Let $H$ be a finite group. The problem  $\Sol{H}$ 
    is
    \be
        \item \NP-complete if $H$ is non-abelian
        \item in \P if $H$ is abelian.
    \ee
\end{theorem}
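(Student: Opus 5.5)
This theorem is cited from Goldmann and Russell, so we do not reprove it here. Were we to do so, we would proceed as follows.

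\emph{Membership in \NP.} A certificate is an assignment $\sigma$ of an element of $H$ to each variable. Since $H$ is fixed and finite, its multiplication table is a constant. Each equation $u_i$ can then be evaluated with $|u_i|$ table lookups, so verification is linear in the input size.

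\emph{The abelian case.} Write $H\cong C_{n_1}\times\cdots\times C_{n_k}$, with the $n_i$ constants. Each equation, after collecting exponents of the variables and moving constants to the right-hand side, becomes a system of linear congruences $\sum_j a_{ij}x_j\equiv b_i \pmod{n_\ell}$, one per cyclic factor. Solvability of linear systems over $\Z/n\Z$ is decidable in polynomial time: compute a Smith normal form (polynomial time by Kannan--Bachem), or reduce to prime-power moduli via the Chinese remainder theorem and use Hermite normal form. Equivalently, we can embed the system as an integer system with extra variables $n_\ell y$ and decide integer solvability in \P.

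\emph{Hardness for non-abelian $H$.} This is the main step. The plan is to reduce from a known \NP-complete constraint problem. Pick $a,b\in H$ with $[a,b]\neq1$.

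If $H$ is not solvable, we would simulate Boolean circuits (CIRCUIT-SAT) following Barrington. Choose a non-abelian simple section and use commutator gadgets: an AND of variables encoded as $g$ versus $1$ is realised by $[x,y]$, with the constants chosen so that the commutator is nontrivial. Variables are constrained to a two-element set $\{1,g\}$ using equations of the form $x=\,^{z}g$ together with an auxiliary trick.

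If $H$ is solvable, we would instead reduce from graph $k$-colouring or one-in-three SAT. The key idea is to take a minimal non-abelian subquotient (for example a Frobenius-type piece $C_p\rtimes C_q$) and use equations to force variables into a specific coset or conjugacy class. Conjugation $\,^{x}c$, with $x$ ranging over a cyclic group acting fixed-point-freely, then encodes colour choices, and the equation $\,^{x}c\,(\,^{y}c)^{-1}\neq1$ is expressed via an auxiliary variable trick.

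The expected obstacle is expressing inequalities (needed for colouring or NAE constraints) using only equations $u=1$. The approach we would try first is to represent ``$x\neq y$'' as the existence of $z$ with $x^{-1}y=\,^{z}c$ for a fixed nontrivial $c$ whose conjugacy class covers exactly the allowed nonzero differences. This requires choosing the subquotient carefully and handling the lifting from the subquotient back to $H$ by adding equations that pin the variables into the relevant subgroup.
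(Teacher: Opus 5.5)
The paper does not prove this statement; it only cites Goldmann and Russell. Relying on the citation, as you do, therefore matches the paper. Your arguments for membership in \NP and for the abelian case (via Smith normal form, which the paper itself uses for abelian targets) are correct. The hardness sketch, however, would not go through as written. In $\Sol{H}$ every variable ranges over all of $H$. Equations with constants and auxiliary variables let you confine a variable to centralizers $C_H(S)$, verbal subgroups, conjugacy classes (via $x={}^{z}g$), and intersections and projections of such sets. They do not let you confine it to an arbitrary subgroup, and to work modulo a normal subgroup you need that subgroup itself to be definable. Already in $C_2\times C_2$ the only nonempty subsets definable in one free variable are single points and the whole group. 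So ``choose a non-abelian simple section'' and ``pin the variables into the relevant subgroup'' are not moves you actually have. Similarly, $x={}^{z}g$ confines $x$ to the conjugacy class of $g$, which never contains $1$. The unspecified ``auxiliary trick'' that would produce a two-valued domain $\{1,g\}$ is precisely the missing step. (In the paper's reduction for $D_{2n}$ such a set appears as the centralizer $\{1,st^r\}$. That is special to this group, since in general $C_H(g)\supseteq Z(H)\langle g\rangle$.)

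The solvable case fails outright for non-abelian nilpotent groups, which the theorem covers. Your colouring gadget needs a cyclic group acting fixed-point-freely, and this never happens in a nilpotent group: elements of coprime order commute, and a $p$-element acting on a nontrivial $p$-group fixes a nontrivial element. Concretely, for $H=D_8$ or $Q_8$ the constraint $x^{-1}y={}^{z}c$ with $c\neq 1$ describes a disjoint union of $4$-cycles or a perfect matching, so the colouring problems you get are bipartite. In fact, in $D_8$ these constraints, together with membership in centralizers and conjugacy classes, are all preserved by the Mal'tsev operation $(x,y,z)\mapsto xy^{-1}z$. Hence any combination of them is decidable in polynomial time.

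What is needed is a gadget in which a commutator computes an AND. In $D_8=\langle r,s\rangle$, take $x\in C(s)=\{1,s,r^2,sr^2\}$ and $y\in C(r)=\langle r\rangle$. Then $[x,y]=r^2$ if $x\in\{s,sr^2\}$ and $y\in\{r,r^3\}$, and $[x,y]=1$ otherwise. Add the parity constraints $[x_1x_2x_3,r]=1$ on elements of $C(s)$, and the linking equation $[xy,sr]=1$, which forces the bit of $x$ to equal that of $y$. This gives NAND, ternary parity and constants, an \NP-hard Boolean language by Schaefer's theorem.

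A uniform alternative uses polymorphisms. View $\Sol{H}$ as the constraint satisfaction problem of the structure on $H$ with the relation $xy=z$ and all constants. Its polymorphisms are exactly the homomorphisms $f\colon H^m\to H$ with $f(g,\dots,g)=g$. Write $f(x_1,\dots,x_m)=f_1(x_1)\cdots f_m(x_m)$, where the images of the $f_i$ commute pairwise. Setting $x=1$ in a weak near-unanimity identity forces all $f_i$ to equal one $h$ with abelian image and $h(x)^m=x$, so $H$ is abelian. Hence a non-abelian $H$ has no weak near-unanimity polymorphism, and the Bulatov--Jeavons--Krokhin criterion (with Mar\'oti--McKenzie) gives \NP-hardness.
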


A key step 
to proving \cref{thm:MainDihedral}
is to reduce the problem of deciding whether a system of  equations over $D_{2n}$ with $n>1$ odd has a solution (which is \NP-hard by the above result) to epimorphism from $\Arb$ to $D_{2n}$, by constructing a finitely presented group from which an epimorphism to $D_{2n}$ exists if and only if the system has a solution.

\section{Preliminary results}

\subsection{Finite targets}

Let $\Fin$ denote the class of all finite groups.
We start by observing that $\Epi{\Arb}{\Fin}$ is  in \NP. 
We note that Holt and Plesken \cite[Chapter 7]{HoltP1989perfect} considered the computational problem of finding epimorphisms onto various classes of finite groups, without explicitly giving a complexity bound, and Friedl and L\"oh \cite[Proposition 5.2]{FriedlLoh} showed that $\Epi{\Arb}{\Fin}$ is decidable.

We assume that the input to the problem is a finite presentation $\Gen{g_1,\dots,g_n}{r_1,\dots,r_m}$ for the domain group $G\in \Arb$ and a multiplication table for the target group $Q\in \Fin$.

\begin{lemma}  \label{lem:epi_finite} 
    The problem    $\Epi{\Arb}{\Fin}$ is in \NP.
\end{lemma}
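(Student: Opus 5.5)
The plan is to exhibit a polynomial-size certificate together with a polynomial-time verifier. The input is a finite presentation $\Gen{g_1,\dots,g_n}{r_1,\dots,r_m}$ of $G$ and a multiplication table for $Q$, so the input size is at least $n+\sum_i|r_i|+|Q|^2$. As certificate we take a set map $\psi\colon\{g_1,\dots,g_n\}\to Q$, written as a list of $n$ elements of $Q$. Each element is an index into the table, so it needs $O(\log|Q|)$ bits, and the whole certificate has size $O(n\log|Q|)$, which is polynomial in the input size.

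The verifier makes two checks.

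\textbf{Homomorphism.} By von Dyck's lemma (\cref{lem:vonD}), $\psi$ extends to a homomorphism $G\to Q$ if and only if $r_j(\psi(g_1),\dots,\psi(g_n))=_Q 1_Q$ for each $j$. To check this, first compute the inverse of each $\psi(g_i)$ by scanning the table, which takes $O(|Q|)$ lookups per generator. Then evaluate each $r_j$ from left to right with one table lookup per letter. The total is $O\big(n|Q|+\sum_j|r_j|\big)$ table operations, which is polynomial.

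\textbf{Surjectivity.} The image of the extended homomorphism is the subgroup $\langle\psi(g_1),\dots,\psi(g_n)\rangle\le Q$. We compute it by closure. Start with $S=\{1_Q\}$ and repeatedly add $s\psi(g_i)$ for every $s\in S$ and every $i$, until nothing new is added. Since $Q$ is finite, the submonoid generated by these elements is already a subgroup. The set $S$ grows at most $|Q|$ times, so this costs $O(|Q|^2 n)$ lookups, which is polynomial. We accept exactly when $|S|=|Q|$.

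Correctness is then immediate. An epimorphism exists if and only if some assignment of the generators passes both checks, since any epimorphism is determined by the images of the generators. I do not expect a real obstacle here. The only points needing care are that the unary encoding of relators and the explicit table make all these costs polynomial in the input length, and that no word problem in $G$ is ever needed, because all checks take place in $Q$.
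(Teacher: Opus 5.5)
Your proposal is correct and follows essentially the same route as the paper. Both guess the images of the generators, check the relators in $Q$ via von Dyck's lemma, and check surjectivity by a closure computation in the multiplication table. The only cosmetic difference is that you close $\{1_Q\}$ under right multiplication by the $\psi(g_i)$, whereas the paper closes the set of generator images under products of pairs.
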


\begin{proof}
    On input a presentation $\Gen{g_1, \dots, g_n}{r_1,\dots, r_m}$ for  $G \in \Arb$, 
    non-deterministically specify 
    values $\tau(g_i)\in Q$ for each $i\in[1,n]$.

    Verify that $\tau$ 
    defines a homomorphism from $G$ to $H$  using \cref{lem:vonD} by checking that each relation is sent to $1_Q$ using the multiplication table for $Q$. 
  
    To verify that $\tau$ is a surjection, we may proceed as follows.
    Fix  a copy of $Q$ and `mark' each $q_j \in Q$ which satisfies $\tau(g_i) = q_j$ for $i\in[1,n]$. While not all of $Q$ is marked, scan the multiplication table for $Q$ to find $q_i,q_j,q_k\in Q$ so that $q_iq_j = q_k$ with $q_i,q_j$  marked and $q_k$  not marked, then mark $q_k$.
    If $\tau$ is a surjection then each $q\in Q$ is the image of some product of generators so the process will terminate with all of $Q$ marked.
    
    Each of the above steps takes polynomial time in the size $n$, $\sum_{i=1}^m\abs{r_i}$ and the size of the multiplication table for $Q$ (which is $O(\abs{Q}^2)$).
\end{proof}

Combining \cref{lem:epi_finite} with \cite[Corollary 1.2]{Kuperberg} 
we immediately have that $\Epi{\Arb}{\Fin}$ is \NP-complete.

\subsection{Dihedral groups}
\label{sec:MainDihedral}

Fix
\begin{equation}\label{eq:presD2n}
    \Gen{s, t}{s^2 = t^n = 1, \, sts = t^{-1}}
\end{equation}
as a presentation for $D_{2n}$. Using these relations, each element of $D_{2n}$ can be expressed uniquely as a word of the form $\alpha t^r$, where $\alpha \in \{1, s\}$ and $r \in [0, n-1]$.

The automorphism group of $D_{2n}$ is straightforward to compute (see, for example, \cite{Jerrythesis}).

\begin{lemma}\label{lem:Aut_D2n} For $r,\mu\in\Z$ let  $\varphi_{r, \mu}\colon D_{2n}\to D_{2n}$ be the map   $s\mapsto  st^r,  t\mapsto  t^\mu$.
    If $n \geq 3$, then
    $\Aut(D_{2n}) = \{\varphi_{r, \mu} \mid r \in [0, n-1], \mu \in [1, n-1], \gcd(\mu, n) = 1\}$.
  
\end{lemma}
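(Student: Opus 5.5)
Two things need to be shown: each $\varphi_{r,\mu}$ in the stated range is an automorphism, and every automorphism of $D_{2n}$ has this form. After that, a short counting argument shows that distinct parameter pairs give distinct maps.

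\textbf{Each $\varphi_{r,\mu}$ is an automorphism.} We apply von Dyck's lemma (\cref{lem:vonD}) to the presentation \eqref{eq:presD2n} and check the three relations.
\begin{itemize}
\item $(st^r)^2 = s t^r s t^r = t^{-r}t^r = 1$.
\item $(t^\mu)^n = 1$.
\item $(st^r)\,t^\mu\,(st^r)^{-1} = s t^{\mu} s = t^{-\mu}$, using that $st^r$ is an involution and $t^r$ commutes with $t^\mu$.
\end{itemize}
So $\varphi_{r,\mu}$ is a homomorphism. Since $\gcd(\mu,n)=1$, the element $t^\mu$ generates $\langle t\rangle$. Together with $st^r$ this gives $s$, so the image is all of $D_{2n}$. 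Because $D_{2n}$ is finite, surjectivity implies bijectivity.

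\textbf{Every automorphism has this form.} Let $\varphi\in\Aut(D_{2n})$ with $n\geq 3$. From the normal form $\alpha t^r$:
\begin{itemize}
\item Every element $st^r$ is an involution.
\item An element $t^k$ has order $n/\gcd(k,n)$.
\end{itemize}
So the elements of order $n$ are exactly the $t^\mu$ with $\gcd(\mu,n)=1$. This uses $n\geq 3$: the reflections have order $2<n$, so none of them has order $n$.

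Since $\varphi$ preserves orders, $\varphi(t)=t^\mu$ with $\gcd(\mu,n)=1$. Then $\varphi(\langle t\rangle)=\langle t\rangle$, so $\varphi(s)\notin\langle t\rangle$, forcing $\varphi(s)=st^r$ for some $r$. An automorphism is determined by the images of the generators $s,t$, hence $\varphi=\varphi_{r,\mu}$.

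\textbf{Distinctness.} Reducing $r$ and $\mu$ modulo $n$, different pairs in the stated range give different images of $(s,t)$, hence different maps. This gives $|\Aut(D_{2n})| = n\,\phi(n)$.

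The only delicate point is the role of $n\geq 3$, which rules out reflections of order $n$. For $n=2$, $D_4\cong C_2\times C_2$ has extra automorphisms that swap $t$ with a reflection. Everything else is routine.
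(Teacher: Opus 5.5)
Your proof is correct and complete. The von Dyck check shows that every $\varphi_{r,\mu}$ is a homomorphism, and $\gcd(\mu,n)=1$ gives surjectivity, hence bijectivity. The order argument, which is exactly where $n\geq 3$ enters, forces $\varphi(t)=t^\mu$ and then $\varphi(s)\notin\langle t\rangle$, so $\varphi(s)=st^r$.

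The paper gives no proof here; it only calls the computation straightforward and cites \cite{Jerrythesis}. Your argument is the standard one that fills in that reference, and it mirrors the reasoning in the proof of \cref{lem:d2_pt1_restricted_epi}, where the image of $d$ is forced to be a rotation of order $n$ and the image of $a$ a reflection.
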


\section{Odd Case}

Throughout this section we restrict our consideration to a dihedral group of order $2n$ with $n>1$ an odd integer.
Recall that for $\Epi{\Arb}{D_{2n}}$, the parameter $n$ is not part of the input, instead the input consists solely of a finite presentation for the domain group.

\begin{lemma}\label{lem:UsefulConjDn}
    For $n > 1$ is odd and $s,t$ generators in the presentation  \cref{eq:presD2n} above,
    $
    D_{2n} = \left\{\alpha_0 \left({}^{\alpha_1}t\right) \cdots \left({}^{\alpha_n}t\right) \mid \alpha_i \in \{1, s\}\right\}.
    $
\end{lemma}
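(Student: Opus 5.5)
The inclusion ``$\supseteq$'' is trivial. For ``$\subseteq$'' we compute each product and show every element of $D_{2n}$ occurs. Note that ${}^{1}t = t$ and ${}^{s}t = sts^{-1} = sts = t^{-1}$ by the relations in \cref{eq:presD2n}. Hence for $\alpha_1,\dots,\alpha_n\in\{1,s\}$,
\[
\left({}^{\alpha_1}t\right)\cdots\left({}^{\alpha_n}t\right) = t^{\epsilon_1+\cdots+\epsilon_n},
\]
where $\epsilon_i = 1$ if $\alpha_i = 1$ and $\epsilon_i=-1$ if $\alpha_i=s$. Each $\epsilon_i$ can be chosen independently. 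It therefore suffices to show that the exponent sums $e=\epsilon_1+\cdots+\epsilon_n$ hit every residue class modulo $n$.

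If $k$ of the $\epsilon_i$ equal $-1$, then $e = n-2k$ with $k\in[0,n]$. Modulo $n$ this is $-2k$. Since $n$ is odd, $\gcd(2,n)=1$, so $2$ is invertible mod $n$. Thus as $k$ runs over $[0,n-1]$, the values $-2k$ run over all residues mod $n$. Concretely, for a target $r\in[0,n-1]$ we take $k\equiv -2^{-1}r \pmod n$ with $k\in[0,n-1]$. Then choosing $\alpha_1=\cdots=\alpha_k=s$ and the rest equal to $1$ gives product $t^r$.

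Finally, the element $\alpha_0\in\{1,s\}$ supplies the coset. By the normal form noted after \cref{eq:presD2n}, every element of $D_{2n}$ is $\alpha t^r$ with $\alpha\in\{1,s\}$ and $r\in[0,n-1]$. Setting $\alpha_0=\alpha$ and choosing $\alpha_1,\dots,\alpha_n$ as above realises it.

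The only real point is the parity issue: $n-2k$ has the same parity as $n$, so the argument genuinely needs $n$ odd to cover all residues modulo $n$. We take exactly $n$ conjugate factors so that $k$ ranges over a full residue system.
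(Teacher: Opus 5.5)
Your proof is correct and follows the same route as the paper: you reduce each product to $\alpha_0 t^{n-2\ell}$, where $\ell$ is the number of $\alpha_i$ equal to $s$, and then show these exponents cover every residue modulo $n$. The only difference is cosmetic: the paper checks this covering with an explicit table of the values $n-2\ell$, while you use the fact that $2$ is invertible modulo $n$ for odd $n$, which is a slightly cleaner way of saying the same thing.
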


\begin{proof}
    Since ${}^{\alpha_i}t \in \{t, t^{-1}\}$ for $\alpha_i \in \{1, s\}$, 
    \[
    \alpha_0 \left({}^{\alpha_1}t\right) \cdots \left({}^{\alpha_n}t\right) = \alpha_0 t^{n - \ell}t^{ - \ell} = \alpha_0 t^{n - 2\ell}
    \]
    where $\ell = \abs{\{i \in [1, n] \mid \alpha_i = s\}} \in [0, n]$. Using the 
 values in Table~\ref{table:odd_table} 
        and the fact that $t^n=1$,  \[\{\alpha_0 t^{n - 2\ell} \mid \alpha_0 \in \{1, s\}, \ell \in [0, n]\} = \{\alpha_0 t^r \mid \alpha_0 \in \{1, s\},  r \in [0, n-1]\}\]
    which proves the claim.
\end{proof}

 \begin{table}[h]\label{table:odd_table}
    \centering
   \[ \begin{array}{|c|c|c|c|c|c|c|c|c|c|}
        \hline
        \ell & 0 & 1 & \cdots & \tfrac{n-1}{2} & \tfrac{n+1}{2}   & \cdots  & n \\
        \hline
        n-2\ell & n & n -2  & \cdots & 1  & -1 & \cdots  & -n\\
              \hline
        (n-2\ell)+n & 0 &   & \cdots &  &   n-1 & \cdots & 0\\
        \hline
    \end{array}\]
    \caption{Computing exponents of $t$ in \cref{lem:UsefulConjDn}}
    \end{table}

\begin{lemma}\label{lem:ODDD2nCommute}
    If $n > 1$ is odd, $r \in [0, n-1]$, and $x \in D_{2n}$ commutes with $st^r$, then $x \in \{1, st^r\}$. In particular, the center $Z(D_{2n})$ is trivial. 
\end{lemma}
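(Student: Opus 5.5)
The proof is a direct computation using the normal form $\alpha t^k$ with $\alpha\in\{1,s\}$ and $k\in[0,n-1]$ from \cref{eq:presD2n}. We split according to whether $x$ is a rotation or a reflection and compare $x\cdot st^r$ with $st^r\cdot x$ in each case. The only relation needed is $t^k s = s t^{-k}$, which follows from $sts=t^{-1}$.

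First, take $x=t^k$. Then $t^k\, st^r = st^{r-k}$, while $st^r\, t^k = st^{r+k}$. These are equal exactly when $t^{2k}=1$, that is, when $n \mid 2k$. Since $n$ is odd, this forces $n\mid k$, so $k=0$ and $x=1$.

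Next, take $x=st^k$. Then $st^k\, st^r = s\,s\,t^{-k}t^r = t^{r-k}$, while $st^r\, st^k = t^{k-r}$. Equality requires $t^{2(r-k)}=1$, so again $n\mid 2(r-k)$. Oddness of $n$ gives $n\mid r-k$, and since $r,k\in[0,n-1]$ we get $k=r$, so $x=st^r$. Combining the two cases, $x\in\{1,st^r\}$.

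For the statement about the center, let $z\in Z(D_{2n})$. Then $z$ commutes with both $s$ (the case $r=0$) and $st$ (the case $r=1$). The first part gives $z\in\{1,s\}\cap\{1,st\}$, and since $n>1$ we have $s\neq st$, so $z=1$.

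The key step in both cases is passing from $n\mid 2m$ to $n\mid m$, which is where the hypothesis that $n$ is odd enters. The rest is routine manipulation of normal forms.
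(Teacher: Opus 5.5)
Your proof is correct and follows the same route as the paper. You split into the cases $x=t^k$ and $x=st^k$ and compare the two products (the paper computes the commutator $[x,st^r]$, obtaining $t^{2k}$ and $t^{2(r-k)}$), and then use that $n$ is odd to pass from $n\mid 2m$ to $n\mid m$. The only differences are cosmetic: for the center you test against $s$ and $st$ where the paper uses $st$ and $st^2$, and you state the use of oddness explicitly where the paper leaves it implicit.
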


\begin{proof}
Write $x=\alpha t^\ell$ for $\alpha\in\{1,s\}$ and  $\ell\in[0,n-1]$. 
If $x=t^\ell$ then $[x,st^r]=t^{2\ell}=1$ if and only if $\ell=0$, so $x=1$. If $x=st^\ell$ then $[x,st^r]=t^{2r-2\ell}=1$ if and only if $n$ divides $2(r-\ell)$, and since $r,\ell\in[0,n-1]$ it follows that $r=\ell$. Thus the center is trivial for $n\geq 3$ since $x\in Z(D_{2n})$ implies $x$ commutes with $st$ and $st^2$.
 \end{proof}

\begin{definition}[Normal form]\label{defn:D2nNF_pt1}
    Let $n>1$ be an odd integer, $\bbX=\{X_1, X_1^{-1},\dots,X_k, X_k^{-1}\}$ and $\bbY=\{Y_{0,1}, Y_{0,1}^{-1},\dots,Y_{n,k}, Y_{n,k}^{-1}\}$. Define $\ONF \colon (\bbX\cup\{s,t,t^{-1}\})^\ast \to (\bbY\cup\{s,t,t^{-1}\})^\ast$ to be the monoid homomorphism induced by the set map 
 \[
        \begin{array}{lllllll}
            X_j \mapsto Y_{0,j}\cdot ({}^{Y_{1,j}}t)\cdot ({}^{Y_{2,j}}t) \cdots  ({}^{Y_{n,j}}t), &\quad &  X_j^{-1} \mapsto 
          ({}^{Y_{n,j}}t)^{-1}\cdots  ({}^{Y_{1,j}}t)^{-1} \cdot Y_{0,j}^{-1}; &\quad j \in [1,k] \\
            s \mapsto s, \\
            t \mapsto t, && t^{-1}\mapsto t^{-1}.
        \end{array}
        \]
\end{definition}
Recall that ${}^{Y_{i,j}}t=Y_{i,j}tY_{i,j}^{-1}$.

\begin{lemma}\label{lem:d2_p1_Sol_iff_D2nNF}
    Let $n,\bbX, \bbY$ and $\ONF$ be as \cref{defn:D2nNF_pt1}. 
    Let $\systemU$ be a system of equations in $D_{2n}$, where each equation $u_i \in (\bbX \cup \{s, t, t^{-1}\})^\ast$. There exists a solution $\sigma_1 \colon \bbX \to D_{2n}$ to $\systemU$ if and only if there exists a solution $\sigma_2 \colon \bbY \to \{1, s\} \subseteq D_{2n}$ to $(\ONF(u_i))_{m}$.
\end{lemma}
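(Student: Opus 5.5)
The key observation is that $\ONF$ fixes the constants $s,t,t^{-1}$ and replaces each variable $X_j$ by the word $w_j := Y_{0,j}\,({}^{Y_{1,j}}t)\cdots({}^{Y_{n,j}}t)$. It replaces $X_j^{-1}$ by the formal inverse of $w_j$. Consequently, for any assignment $\sigma_2\colon \bbY\to D_{2n}$, evaluating $\ONF(u_i)$ under $\sigma_2$ gives the same element as evaluating $u_i$ under the assignment $X_j\mapsto \sigma_2(w_j)$. I will record this substitution identity first. It follows directly because both sides are images of monoid homomorphisms agreeing on generators, and because the image of $\ONF(X_j^{-1})$ is the inverse of the image of $\ONF(X_j)$. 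Both directions then reduce to choosing the appropriate assignment.

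For the direction ``$\Leftarrow$'', suppose $\sigma_2\colon\bbY\to\{1,s\}$ solves $(\ONF(u_i))_m$. Define $\sigma_1(X_j):=\sigma_2(w_j)\in D_{2n}$ and $\sigma_1(X_j^{-1}):=\sigma_2(w_j)^{-1}$. By the substitution identity, each $u_i$ evaluates under $\sigma_1$ to the same element as $\ONF(u_i)$ under $\sigma_2$, namely $1$. Hence $\sigma_1$ is a solution. This direction does not even use the restriction to $\{1,s\}$.

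For the direction ``$\Rightarrow$'', suppose $\sigma_1$ solves $\systemU$, and write $h_j=\sigma_1(X_j)$. By \cref{lem:UsefulConjDn}, every element of $D_{2n}$ has the form $\alpha_0({}^{\alpha_1}t)\cdots({}^{\alpha_n}t)$ with $\alpha_i\in\{1,s\}$. So for each $j$ we choose $\alpha_{0,j},\dots,\alpha_{n,j}\in\{1,s\}$ realising $h_j$, and set $\sigma_2(Y_{i,j})=\alpha_{i,j}$ and $\sigma_2(Y_{i,j}^{-1})=\alpha_{i,j}^{-1}=\alpha_{i,j}$. Then $\sigma_2(w_j)=h_j$, and the substitution identity shows that each $\ONF(u_i)$ evaluates to $u_i$ evaluated under $\sigma_1$, which is $1$.

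I expect no real obstacle, since \cref{lem:UsefulConjDn} carries the content. The only care needed is bookkeeping. Solutions must respect inverses, $\sigma(Y^{-1})=\sigma(Y)^{-1}$, and this is consistent with values in $\{1,s\}$ because $s^{-1}=s$. Also, the image of $X_j^{-1}$ under $\ONF$ must evaluate to the inverse of the image of $X_j$. This holds because $({}^{Y}t)^{-1}$ evaluates to $Y t^{-1} Y^{-1}$, so the reversed product is the group inverse.
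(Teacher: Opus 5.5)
Your proposal is correct and follows the same route as the paper, whose proof is the one-line observation that, by \cref{lem:UsefulConjDn}, substituting $\ONF(X_j)$ for $X_j$ and restricting the $Y_{i,j}$ to $\{1,s\}$ does not change the solvability of the system. You have simply made that observation explicit: the substitution identity, the two directions, and the inverse bookkeeping (using $s^{-1}=s$ and that $\ONF(X_j^{-1})$ evaluates to the inverse of $\ONF(X_j)$).
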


\begin{proof}
    For $i\in[1,m]$ each equation is a word $u_i(s,t,t^{-1}, X_1, X_1^{-1},\dots,X_k, X_k^{-1})$.
    By \cref{lem:UsefulConjDn}, replacing each variable $X_j$ by the word $Y_{0,j}\left({}^{Y_{1,j}}t\right) \cdots \left({}^{Y_{n,j}}t\right)=\ONF(X_j)$ in each equation and restricting $Y_{i,j}$ to take values in $\{1,s\}$ does not change the set of solutions. 
    Thus, we can rewrite each $u_i$ as $\ONF(u_i)$, and the result follows.
\end{proof}

Next,  build a finitely presented group from a system of equations over $D_{2n}$.

\begin{definition}
[Constructing a group presentation]
\label{defn:D2nNF_group}
    Let $n, \bbX,\bbY,\ONF$, 
   and $\systemU$ be as in the previous lemma, and 
   let $\cG_{n,k} = \{g_{i,j} \mid i\in[0,n],j\in[1,k]\}$  be a set of  $(n+1)k$ distinct letters.
    Define $\lambda\colon (\{s,t,t^{-1}\} \cup\bbY)^\ast \to (\{a, d,d^{-1}\} \cup \cG_{n,k}\cup\cG_{n,k}^{-1})^\ast$ as the monoid homomorphism  induced by the bijection
   \[  \lambda\colon\left\{\begin{array}{llllllll}
            s &\mapsto a \\
            t &\mapsto  d,& \quad &  t^{-1}&\mapsto d^{-1} \\
            Y_{i,j} &\mapsto g_{i,j}, &\quad&
            Y_{i,j}^{-1} &\mapsto g_{i,j}^{-1}; & \quad i \in [0,n],j\in[1,k].
    \end{array}\right.\]
We let $\DOddGroup(n,\systemU)$ denote the group with presentation
                \[\Gen
             { \{a, d\}\cup  \cG_{n,k} }{ \{a^2,  d^n, adad, \lambda(\ONF(u_i)), [g,g'], [g,a], g^2\mid  i\in[1,m], g,g' \in \cG_{n,k}    \}   }. \]
\end{definition}

\begin{remark}\label{rmk:OddDnPolyTimeConstruct}It is clear that for $n$  a fixed constant, the finite presentation for $\DOddGroup(n,\systemU)$ can be constructed in quadratic time in the size $k+\sum_{i=1}^m|u_i|$ of the system of equations. 
\end{remark}

The idea of this construction is to force that the system has a solution if and only if any epimorphism which maps $\DOddGroup(n,\systemU)$ onto $D_{2n}$  sends (for some automorphism $\varphi$  of $D_{2n}$) 
$a\mapsto \varphi(s)$, $d\mapsto\varphi(t)$, and $g_{i,j}$ to either  $1$ or $\varphi(s)$.
This is the content of the next two lemmas.

\begin{lemma}\label{lem:d2_pt1_restricted_epi}
    If  $\psi\colon \DOddGroup(n,\systemU) \to D_{2n}$ is an  epimorphism, then there exists 
    $\varphi \in \Aut(D_{2n})$ such that
    \begin{align*}
        \psi\colon \begin{cases}
            a &\mapsto \varphi(s) \\
            d &\mapsto \varphi(t) \\
            g_{i,j} &\mapsto  \gamma_{i,j} \in \gen{\varphi(s)}; \quad i \in [0,n], j \in [1,k]. 
        \end{cases}
    \end{align*}
\end{lemma}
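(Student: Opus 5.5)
The plan is to pin down $\psi(a)$ and $\psi(d)$ first, then show that the defining relations force $\psi(d)$ to generate the rotation subgroup $\langle t\rangle$ and $\psi(a)$ to be a reflection. After that, \cref{lem:Aut_D2n} provides the automorphism $\varphi$, and \cref{lem:ODDD2nCommute} controls the images of the $g_{i,j}$.

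\textbf{Step 1: the image of $d$.} Write $x=\psi(a)$ and $y=\psi(d)$. The relations give $x^2=1$, $y^n=1$ and $xyxy=1$. Every element of $D_{2n}$ of the form $st^r$ has order $2$. Since $n$ is odd, $y^n=1$ therefore forces $y\in\langle t\rangle$, so $y=t^\mu$ for some $\mu\in[0,n-1]$.

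\textbf{Step 2: the image of $a$.} I will show $x=st^r$ for some $r$, and not a rotation. The key observation is that every $g\in\cG_{n,k}$ commutes with $a$ and has $g^2=1$. Hence $\psi(g)$ commutes with $x$ and $\psi(g)^2=1$.

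Suppose $x\in\langle t\rangle$. Then $x^2=1$ with $n$ odd gives $x=1$. Each $\psi(g)$ is an involution or trivial, but the $g$'s pairwise commute, so their images generate an abelian subgroup. Involutions in $D_{2n}$ are the reflections $st^r$, and by \cref{lem:ODDD2nCommute} two distinct reflections do not commute. So the images of all the $g$'s lie in $\{1,st^r\}$ for a single $r$.

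Surjectivity then says $\langle t^\mu, st^r\rangle = D_{2n}$, and this is possible with $x=1$. That is a problem, and I think this is the main obstacle: the relations $[g,a]$ and $[g,g']$ alone do not immediately exclude $x=1$. I would handle it as follows. If $x=1$, the relation $adad$ gives $y^2=1$, so $y=1$ since $n$ is odd. The image of $\psi$ is then generated by commuting elements of order at most $2$, so it is abelian and cannot be all of $D_{2n}$, which is nonabelian for $n\geq 3$. Contradiction. Hence $x\neq 1$, so $x=st^r$ is a reflection.

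\textbf{Step 3: $\mu$ is a unit mod $n$.} Now $\psi(g)$ commutes with $st^r$, so \cref{lem:ODDD2nCommute} gives $\psi(g)\in\{1,st^r\}=\langle x\rangle$. The whole image is therefore $\langle st^r, t^\mu\rangle$. For this to equal $D_{2n}$ the rotation subgroup must be generated by $t^\mu$; index considerations give $\langle t^\mu\rangle=\langle t\rangle$. Thus $\gcd(\mu,n)=1$ and $\mu\in[1,n-1]$.

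\textbf{Conclusion.} By \cref{lem:Aut_D2n} (valid since $n\ge 3$), $\varphi=\varphi_{r,\mu}$ is an automorphism with $\varphi(s)=st^r=\psi(a)$ and $\varphi(t)=t^\mu=\psi(d)$. Each $\gamma_{i,j}=\psi(g_{i,j})\in\langle\varphi(s)\rangle$, as required.
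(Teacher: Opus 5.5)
Your proof is correct and is essentially the paper's argument. It uses the same relations ($d^n$, $a^2$, $adad$, and the pairwise-commuting involutions in $\cG_{n,k}$) together with \cref{lem:ODDD2nCommute} and \cref{lem:Aut_D2n}, only reordered: you get $\psi(d)\in\langle t\rangle$ directly from $d^n=1$ with $n$ odd, then exclude $\psi(a)=1$ via $adad$ plus the abelian-image argument, whereas the paper excludes $\psi(d)=1$ first.

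Your order count for $\gcd(\mu,n)=1$ replaces the paper's explicit word computation: the image $\langle st^r,t^\mu\rangle$ has order $2n/\gcd(\mu,n)$ because $st^r$ normalizes $\langle t^\mu\rangle$. The exploratory detour in Step 2 about all $\psi(g)$ sharing a single reflection is unnecessary and can be deleted.
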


\begin{proof}
For readability we denote $\DOddGroup(n,\systemU)$ as $G$ in this proof. 

If $\psi(d)=1$ then,  since $[a,g]$ is a relation in $G$ for all $g\in \cG_{n,k}$, $\gen{a, g\mid g\in \cG_{n,k}}$ is abelian and so $\psi(G)=\gen{\psi(a), \psi(g)\mid g\in \cG_{n,k}}$ is abelian,  which would mean that  $\psi$ is not surjective onto $D_{2n}$. Thus $\psi(d)\neq 1$.
If
$\psi(d)^2 = 1$, then the order
of $\psi(d)$ must divide $2$ and $n$, hence the order is $1$, but since $\psi(d) \neq 1$ this is a contradiction.
Thus $\psi(d)^2\neq 1$.
Write $\psi(d)=\alpha t^{p}$   with $p\in[0,n-1]$ and $\alpha\in\{1,s\}$, but since $st^pst^p=1$ in $D_{2n}$, it must be that 
$\psi(d)=t^p$.

If $\psi(a)=1$ then since $adad$ is a relation in $G$, $1=\psi(adad)=\psi(d^2)$ which is a contradiction. So $\psi(a)\neq 1$ and its order divides $2$ since $a^2$ is a relation in $G$. It follows that $\psi(a)=st^{r}$ for some $ r \in [0,n-1]\}$.

Again, since $[g_{i,j},a]$ is a relation in $G$ for all $g_{i,j}\in \cG_{n,k}$, $\psi(g_{i,j})$ commutes with $\psi(a)$, so by  \cref{lem:ODDD2nCommute},
$\psi(g_{0,1}),\dots,\psi(g_{n,k})  \in \{1,st^r\}=\gen{\psi(a)}$.
It follows that $\psi(G)=\gen{\psi(a),\psi(d)}=\gen{st^r,t^p}$.

To show that $\gcd(p,n)=1$ we proceed as follows. Since $\gcd(1,n)=1$,  $\varphi_{n-r,1}\circ\psi(G)=\gen{s,t^p}$ where  $\varphi_{n-r,1}\in\Aut(D_{2n})$.
Since $\varphi_{n-r,1}\circ \psi$ is surjective onto $D_{2n}$, we can write  $t\in D_{2n}$ as the image of some word over the generators of $G$, which means \[
t =(t^p)^{i_0}s(t^p)^{i_1}s\cdots s(t^p)^{i_{\ell-1}} s(t^p)^{i_\ell} \quad i_j\in \Z,\ell\in\N\]
which holds only if the number of $s$ letters is even, so $\ell=2m$, and then
\[t=
t^{i_0p} t^{-i_1p} \cdots t^{-i_{2m-1}p}t^{i_{2m}p}=(t^p)^{i_0-i_1+\dots+i_{2m}}
\] so $t^{1-px}=1$ where 
$ x=\sum_{j=0}^{2m} (-1)^ji_j \in \Z$, so $n$ divides $1-p x$ and so 
 $\gcd(p,n)=1$. 
 
Therefore, by \cref{lem:Aut_D2n}, there exists $\varphi_{r,p} \in \Aut(D_{2n})$ such that $\varphi_{r,p}(t) = t^{p}=\psi(d)$ and  $\varphi_{r,p}(s) = st^r=\psi(a)$.
\end{proof}

\begin{lemma}\label{lem:d2np1_iff_epi_and_sol}
    Let $n>1$, $\bbX = \{X_1,X_1^{-1}, \dots, X_k,X_k^{-1}\}$ and $\systemU$ with   $u_i \in (\{s,t,t^{-1}\}\cup \bbX)^\ast$ be a system of equations over $D_{2n}$. There exists an epimorphism $\psi\colon \DOddGroup(n,\systemU) \to D_{2n}$ if and only if there exists a solution $\sigma\colon \bbX \to D_{2n}$ to the system  $\systemU$.
\end{lemma}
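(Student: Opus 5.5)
We prove the two directions separately, using \cref{lem:d2_p1_Sol_iff_D2nNF} to pass between solutions of $\systemU$ and solutions of $(\ONF(u_i))_m$ with values in $\{1,s\}$. The idea is that, after composing with a suitable automorphism, an epimorphism from $\DOddGroup(n,\systemU)$ is exactly such a restricted solution, read off from the images of the $g_{i,j}$. Throughout, $n>1$ is odd as in this section (this is needed for \cref{lem:UsefulConjDn} and \cref{lem:d2_pt1_restricted_epi}).

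($\Leftarrow$) Suppose $\sigma\colon\bbX\to D_{2n}$ solves $\systemU$. By \cref{lem:d2_p1_Sol_iff_D2nNF} there is a solution $\sigma_2\colon\bbY\to\{1,s\}$ of $(\ONF(u_i))_m$. Define $\psi$ on generators by $a\mapsto s$, $d\mapsto t$ and $g_{i,j}\mapsto\sigma_2(Y_{i,j})$. We check the relators via \cref{lem:vonD}.
\begin{itemize}
\item $a^2$, $d^n$ and $adad$ map to $s^2$, $t^n$ and $stst$, which are trivial in $D_{2n}$.
\item Since every $\psi(g)$ lies in the abelian subgroup $\{1,s\}$, the relators $[g,g']$, $[g,a]$ and $g^2$ map to $1$.
\item By construction $\psi\circ\lambda$ agrees with $\sigma_2$ (extended by $s\mapsto s$, $t^{\pm1}\mapsto t^{\pm1}$) on $\{s,t,t^{-1}\}\cup\bbY$. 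Hence $\psi(\lambda(\ONF(u_i)))=\sigma_2(\ONF(u_i))=1$.
\end{itemize}
So $\psi$ is a homomorphism. It is surjective because its image contains $s$ and $t$.

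($\Rightarrow$) Let $\psi$ be an epimorphism. By \cref{lem:d2_pt1_restricted_epi} there is $\varphi\in\Aut(D_{2n})$ with $\psi(a)=\varphi(s)$, $\psi(d)=\varphi(t)$ and $\psi(g_{i,j})\in\{1,\varphi(s)\}$. Set $\psi'=\varphi^{-1}\circ\psi$. Then $\psi'$ is an epimorphism with $a\mapsto s$, $d\mapsto t$ and $g_{i,j}\mapsto\gamma'_{i,j}\in\{1,s\}$. Define $\sigma_2(Y_{i,j})=\gamma'_{i,j}$ and $\sigma_2(Y_{i,j}^{-1})=\gamma'^{-1}_{i,j}$. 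Since $\lambda(\ONF(u_i))$ is a relator of $\DOddGroup(n,\systemU)$, we get
\[
1=\psi'(\lambda(\ONF(u_i)))=\sigma_2(\ONF(u_i)),
\]
where the constants $s$, $t^{\pm1}$ are fixed because $\psi'(a)=s$ and $\psi'(d)=t$. Thus $\sigma_2$ is a solution of $(\ONF(u_i))_m$ with values in $\{1,s\}$, and \cref{lem:d2_p1_Sol_iff_D2nNF} gives a solution $\sigma$ of $\systemU$.

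The main obstacle is the step in ($\Rightarrow$) where the epimorphism is normalised by $\varphi^{-1}$. Without it, $\psi$ sends the constants $s$ and $t$ to $\varphi(s)$ and $\varphi(t)$ rather than to themselves, so the relators $\lambda(\ONF(u_i))$ would not directly say that the original equations are satisfied. The normalisation uses both that $\varphi$ is invertible and that $\varphi$ carries $\gen{s}$ onto $\gen{\varphi(s)}$, so that $\psi'(g_{i,j})\in\{1,s\}$. The remaining verification is that $\psi'\circ\lambda$ coincides with the evaluation map for $\sigma_2$ on $(\{s,t,t^{-1}\}\cup\bbY)^\ast$. This holds because both are monoid homomorphisms that agree on the letters.
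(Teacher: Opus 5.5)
Your proposal is correct and follows essentially the same route as the paper. In the ($\Rightarrow$) direction you normalise by $\varphi^{-1}$ using \cref{lem:d2_pt1_restricted_epi} and read off a $\{1,s\}$-valued solution of $(\ONF(u_i))_m$; in the ($\Leftarrow$) direction you build $\psi$ from such a solution and check the relators via \cref{lem:vonD}, with \cref{lem:d2_p1_Sol_iff_D2nNF} linking both directions to the original system $\systemU$.
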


\begin{proof}
   Again let $G=\DOddGroup(n,\systemU)$. Assume that there exists an epimorphism $\psi'\colon G\to D_{2n}$.
    By \cref{lem:d2_pt1_restricted_epi} there exists $\varphi \in \Aut(D_{2n})$ such that 
    \begin{align*}
        \psi'&\colon \begin{cases}
            a &\mapsto \varphi(s) \\
            d &\mapsto \varphi(t) \\
            g_{i,j} &\mapsto  \gamma_{i,j}' \in \gen{\varphi(s)}; \quad i \in [0,n], j \in [1,k] 
        \end{cases}
    \intertext{so letting $\psi = {\varphi^{-1}} \circ \psi'$ yields  an epimorphism}
        \psi&\colon \begin{cases}
            a &\mapsto s \\
            d &\mapsto t \\
            g_{i,j} &\mapsto  \gamma_{i,j} \in \gen{s}; \quad  i \in [0,n], j \in [1,k]. 
        \end{cases}
    \end{align*}

    Define $\sigma\colon \bbY \to \{1,s\}$ by $\sigma(Y_{i,j}) = \gamma_{i,j}, \sigma(Y_{i,j}^{-1}) = \gamma_{i,j}^{-1}$.
    Note that since $s^2 = 1$, then for all $Y_{i,j} \in \bbY$, $\sigma(Y_{i,j}) = \sigma(Y_{i,j}^{-1})$, so without loss of generality we may assume $\bbY=\{Y_{0,1}, \dots, Y_{n,k}\}$. For $i\in[1,m]$ let $v_i\in(\{s,t,t^{-1}\}\cup\bbY)^*$ be such that $\ONF(u_i)=v_i$. 
    Since $\psi$ is a homomorphism, for each 
    $i \in [1,m]$,
     \begin{align*}
        1 = \psi(\lambda(\ONF(u_i))) &= \psi(\lambda(v_i(s,t,t^{-1}, Y_{0,1},\dots,Y_{n,k}))) \\
        &= \psi(v_i(a,d,d^{-1}, g_{0,1}, \dots, g_{n,k})) \\
        &= v_i(s,t,t^{-1}, 
        \gamma_{0,1}, \dots, \gamma_{n,k})\\
        &= \sigma(v_i(s,t,t^{-1},Y_{0,1},\dots,Y_{n,k})) = \sigma(\ONF(u_i))
    \end{align*}
    so $\sigma$ solves $(\ONF(u_i))_{[1,m]}$, and the result follows by \cref{lem:d2_p1_Sol_iff_D2nNF}.

Conversely, 
assume there exists a solution 
to $\systemU$, so by \cref{lem:d2_p1_Sol_iff_D2nNF} there exists a solution $\sigma\colon \bbY \to \{1,s\}$ to $(\ONF(u_i))_{[1,m]}$, so for $i \in [1,m]$, if $\ONF(u_i) = v_i$  then
    \begin{equation}\label{eqn:d2_pt1_ifSol_Hom}
        \sigma(\ONF(u_i)) = \sigma(v_i(s,t,t^{-1}, \bbY))=1. 
    \end{equation}

    Define $\psi\colon \{a,d,d^{-1}\}\cup\cG_{n,k}\cup\cG_{n,k}^{-1} \to D_{2n}$ as the set map
 \[
        \psi\colon \left\{\begin{array}{llllll}
            a &\mapsto s \\
            d &\mapsto t, &\quad &d^{-1}\mapsto t^{-1}\\
            g_{i,j} &\mapsto \sigma(Y_{i,j}) &
            \quad & g_{i,j}^{-1}  \mapsto \sigma(Y_{i,j})^{-1}; &
            \quad  &  g_{i,j} \in \cG.
        \end{array}\right.
\]

    Since the other relations in $G$ clearly map to $1$ in $D_{2n}$, 
    by \cref{lem:vonD},  $\psi$ induces a homomorphism from $G$ to $D_{2n}$ if and only if $\psi(\lambda(\ONF(u_i))) = 1$ for all $i\in[1,m]$.

It follows that 
    \begin{align*}
        \psi(\lambda(\ONF(u_i))) &= \psi(\lambda(v_i(s,t,t^{-1}, \bbY)))\\ 
        &= v_i(s,t, t^{-1},\sigma(\bbY))= \sigma(v_i(s,t, t^{-1},\bbY))=1
        \quad \textnormal{ by \cref{eqn:d2_pt1_ifSol_Hom}}
    \end{align*}
   so $\psi$ is a homomorphism, which is surjective since  $\psi(G) = \gen{s,t} = D_{2n}$.
\end{proof}

We now have the main result for this section.
\begin{theorem}\label{thm:mainODD} Let $n>1$ be an odd integer. The problem 
$\Sol{D_{2n}}$
     is 
    polynomial time reducible to $\Epi{\Arb}{D_{2n}}$ (and therefore $\Epi{\Arb}{D_{2n}}$ is \NP-hard).
\end{theorem}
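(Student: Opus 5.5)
The reduction is the map sending a system of equations $\systemU$ over $D_{2n}$ to the finite presentation of $\DOddGroup(n,\systemU)$ from \cref{defn:D2nNF_group}. Correctness and efficiency are then checked separately, and \NP-hardness follows by composing with \cref{thm:eqn_d2n_NPcomp}.

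First, the input format. An instance of $\Sol{D_{2n}}$ is a system of equations whose constants are elements of $D_{2n}$. Since $n$ is a fixed constant, I will normalise each constant to a word over $\{s,t,t^{-1}\}$, for instance $\alpha t^r$ with $r\in[0,n-1]$. This replaces each constant by a word of bounded length, so after linear-time preprocessing every $u_i$ lies in $(\{s,t,t^{-1}\}\cup\bbX)^\ast$, as \cref{lem:d2_p1_Sol_iff_D2nNF} and \cref{lem:d2np1_iff_epi_and_sol} require. Next I construct the presentation. The map $\ONF$ multiplies the length of each occurrence of a variable by a factor of order $3n+1$, a constant. The map $\lambda$ is a letter-by-letter renaming. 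The remaining relators $a^2$, $d^n$, $adad$, $[g,g']$, $[g,a]$ and $g^2$ number $O(((n+1)k)^2)$, each of constant length. By \cref{rmk:OddDnPolyTimeConstruct}, the whole presentation is therefore produced in time quadratic in $k+\sum_i|u_i|$, which is polynomial.

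Correctness is exactly \cref{lem:d2np1_iff_epi_and_sol}. The system $\systemU$ has a solution in $D_{2n}$ if and only if there is an epimorphism $\DOddGroup(n,\systemU)\to D_{2n}$. So the instance is a yes-instance of $\Sol{D_{2n}}$ precisely when its image is a yes-instance of $\Epi{\Arb}{D_{2n}}$, and the map is a polynomial-time many-one reduction.

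For the parenthetical claim, $D_{2n}$ is non-abelian for odd $n>1$, since $n\geq 3$ and $sts=t^{-1}\neq t$. Hence \cref{thm:eqn_d2n_NPcomp} makes $\Sol{D_{2n}}$ \NP-complete, and \NP-hardness transfers to $\Epi{\Arb}{D_{2n}}$ because polynomial-time reductions compose. Together with \cref{lem:epi_finite} this even gives \NP-completeness.

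I expect only one point to need care: the encoding of constants when a system is handed over as input. If constants are given as arbitrary group elements or as words, they have to be rewritten over $\{s,t,t^{-1}\}$. This is trivial for fixed $n$ but should be stated explicitly. Everything else is bookkeeping on top of the lemmas already proved.
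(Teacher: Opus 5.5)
Your proposal is correct and matches the paper's proof: the reduction is $\systemU \mapsto \DOddGroup(n,\systemU)$, computed in polynomial time by \cref{rmk:OddDnPolyTimeConstruct}, with correctness given by \cref{lem:d2np1_iff_epi_and_sol} and hardness transferred from \cref{thm:eqn_d2n_NPcomp}. Your additional remarks spell out points the paper leaves implicit: normalising constants to words over $\{s,t,t^{-1}\}$, checking that $D_{2n}$ is non-abelian, and using \cref{lem:epi_finite} to get membership in \NP.
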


\begin{proof}
    Given an input system of equations $\systemU$ with variables $\bbX = \{X_1, X_1^{-1}, \dots, X_k, X_k^{-1}\}$ over $D_{2n}$, we can construct the group $G=\DOddGroup(n, \systemU)$  in polynomial time by \cref{rmk:OddDnPolyTimeConstruct}. By \cref{lem:d2np1_iff_epi_and_sol}, a solution to $\systemU$ exists if and only if there exists an epimorphism from $G$ to $D_{2n}$, and so deciding whether or not there is an epimorphism from  $G$ to $D_{2n}$ is polynomial time reducible to 
    deciding whether or not the system $\systemU$ over $D_{2n}$ has a solution. 

    It follows that   $\Epi{\Arb}{D_{2n}}$ is \NP-hard by \cref{thm:eqn_d2n_NPcomp}.
\end{proof}

\section{Reduction to Quotient by the center}

We now prove a transfer result 
which gives \cref{prop:CenterQuotient}.

\begin{lemma}\label{lem:QuotientCenter}
Let $H\in \Arb$ with  $Z(H)$ finitely generated. The problem 
	 $\Epi{\Arb}{H/Z(H)}$ is  polynomial time reducible to $\Epi{\Arb}{H} $. 
     
     In particular, if  
      $\Epi{\Arb}{H/Z(H)}$ is \NP-hard then $\Epi{\Arb}{H} $ is \NP-hard. 
  \end{lemma}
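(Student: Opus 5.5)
The plan is to give, for a fixed $H\in\Arb$, a polynomial-time construction that turns a presentation $\Gen{X}{R}$ of $G$ into a presentation of a group $G'$ with the property that $G'$ maps onto $H$ if and only if $G$ maps onto $H/Z(H)$. The idea is to relax each relator of $G$ so that it only has to be central rather than trivial. We also add extra generators whose images can fill out $Z(H)$.

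\textbf{Construction.} Since $Z(H)$ is finitely generated, fix once and for all generators $c_1,\dots,c_p$ of $Z(H)$; the number $p$ is a constant, because $H$ is fixed. Given $\Gen{X}{R}$, let $z_1,\dots,z_p$ be new letters and define
\[
G'=\Gen{X\cup\{z_1,\dots,z_p\}}{[r,x],\ [r,z_j],\ [z_j,x],\ [z_i,z_j] \ \ (r\in R,\ x\in X,\ i,j\in[1,p])}.
\]
Each relator $[r,x]$ has length $2|r|+2$. The total size is therefore $O(|X|\sum_{r\in R}|r| + p|X| + p^2)$, which is polynomial (quadratic) in the input size, and the presentation is clearly computable in that time.

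\textbf{($\Rightarrow$)} Suppose $\psi\colon G'\to H$ is an epimorphism. The relators $[z_j,x]$ and $[z_i,z_j]$ say that $\psi(z_j)$ commutes with the images of all generators of $G'$. Since $\psi$ is surjective, these images generate $H$, so $\psi(z_j)\in Z(H)$. In the same way, the relators $[r,x]$ and $[r,z_j]$ give $\psi(r)\in Z(H)$ for every $r\in R$. Let $\pi\colon H\to H/Z(H)$ be the quotient map. Then $\pi\circ\psi$ sends every $r\in R$ to $1$. By \cref{lem:vonD}, the assignment $x\mapsto\pi(\psi(x))$ therefore defines a homomorphism $G\to H/Z(H)$. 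Its image contains $\pi\psi(X)$, and $\pi\psi(X)$ together with $\pi\psi(z_j)=1$ generates $\pi\psi(G')=H/Z(H)$. Hence this homomorphism is surjective.

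\textbf{($\Leftarrow$)} Suppose $\phi\colon G\to H/Z(H)$ is an epimorphism. For each $x\in X$ choose a preimage $h_x\in H$ with $\pi(h_x)=\phi(x)$, and set $\psi(x)=h_x$ and $\psi(z_j)=c_j$. For $r\in R$, the element $r(h_x)$ maps under $\pi$ to $\phi(r)=1$, so $r(h_x)\in Z(H)$. Consequently every commutator relator of $G'$ is sent to $1$: each one involves either a central element $r(h_x)$ or some $c_j$. By \cref{lem:vonD}, $\psi$ is a homomorphism. For surjectivity, let $K=\psi(G')$. Then $\pi(K)=H/Z(H)$, so $KZ(H)=H$. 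Since $Z(H)=\gen{c_1,\dots,c_p}\subseteq K$, it follows that $K=H$.

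The only delicate point is the choice of construction. A naive choice such as $G\times Z(H)$ fails because the extension $Z(H)\to H\to H/Z(H)$ need not split. The device of imposing $[r,x]$ instead of $r$ handles this by letting each relator map to an arbitrary central element. The second statement of the lemma then follows immediately, since a polynomial-time reduction transfers \NP-hardness.
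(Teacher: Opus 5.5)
Your proof is correct and takes essentially the same approach as the paper. The paper instead adds a new generator $x_r$ with relator $r\,x_r^{-1}$ for each $r\in R$, and makes the $x_r$ and the new generators $y_c$ ($c\in\Delta$) commute with every generator; this is Tietze-equivalent to your relators $[r,x]$ and $[z_j,x]$, and both directions of your argument (images of central elements are central under an epimorphism, then lifting $\phi$ and using $Z(H)\subseteq\psi(G')$ for surjectivity) match the paper's.
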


\begin{proof}
Let $\Delta$ denote a finite generating set of $Z(H)$.
Given $G = \Gen{\Sigma}{R}$ 
as an instance of $\Epi{\Arb}{H/Z(H)}$, 
 construct a presentation  $\widetilde G = 
 \langle\widetilde \Sigma \mid \widetilde R\rangle$ with
\smallskip
\begin{align*}
    \widetilde \Sigma &= \Sigma \cup X \cup Y \qquad \quad\text {with } X = \{\,x_r\mid r\in R\,\}, Y = \{\,y_c \mid c \in \Delta\,\},\\
	\widetilde R &= \{\, r\,  x_r ^{-1} \mid r \in R\,\}\: \cup\: \{\, [a,x] \mid a\in \widetilde \Sigma,\, x \in X \cup Y\,\}.
\end{align*}

Clearly, this presentation is finite and can be computed in polynomial time. Therefore, it remains to show that $G \twoheadrightarrow H/Z(H)$ if and only if $ \widetilde G \twoheadrightarrow H$.

Let $\psi\colon \widetilde G \twoheadrightarrow H$ be an epimorphism, and let
 $\pi_H\colon H\to H/Z(H)$ and $\pi_G\colon \widetilde G\to \widetilde G/\gen{X\cup Y}$ be the respective quotient maps.
Since $\gen{X \cup Y} \leq Z(\widetilde G)$,  
 $\psi(\gen{X \cup Y}) \subseteq Z(H)=\ker(\pi_H)$, so $\gen{X\cup Y}\subseteq \ker( \pi_H\circ \psi)$.
This yields an epimorphism \[\pi_H\circ \psi\colon \widetilde G\to H/Z(H)\] and since  $\gen{X\cup Y}\subseteq \ker( \pi_H\circ \psi)$, by \cref{lem:univQuotient} there exists a unique epimorphism $\tilde \psi$ as in \cref{fig:commutativeDiag}.

 \begin{figure}[h]
\begin{center}
\begin{tikzcd}[column sep=3em, row sep=2.5em, ampersand replacement=\&]
    \widetilde{G}
        \arrow[r, two heads, "\psi"]
        \arrow[dr, two heads, "\pi_G"']
    \& H
        \arrow[r, two heads, "\pi_H"]
    \& H/Z(H) \\
    \&
    G \cong \widetilde{G}/\gen{X\cup Y}
        \arrow[ur, dashed, two heads, "\tilde{\psi}"']
    \&
\end{tikzcd}  
\end{center}\caption{Commutative diagram in the proof of  
\cref{lem:QuotientCenter}.
\label{fig:commutativeDiag}}
\end{figure}

For the other direction, let $\phi: G \twoheadrightarrow H/Z(H)$ be an epimorphism, and let $F(\widetilde\Sigma)$ denote the free group with free basis $\widetilde\Sigma$.
Define a homomorphism $\psi: F(\widetilde\Sigma) \to H$ via the set map 	
\begin{itemize}
	\item for $a\in \Sigma$, let $\psi(a)=h$ for some choice of $h$ with $\phi(a)=hZ(H)$  
	\item	for $x_r\in X$ where $r=a_1\cdots a_n\in \Sigma^*$, set $\psi(x_r) = \psi(a_1)\cdots \psi(a_n)$ (so $\psi(x_r)=\psi(r)$)
	\item	for $y_c\in Y$, set $\psi(y_c) = c\in\Delta$.
\end{itemize}

We first show that $\psi$ is surjective.
Let $h\in H$. Since $\phi$ is surjective, the coset $hZ(H)$ has preimage  some $g \in G=\gen{\Sigma} \leq F(\widetilde\Sigma)$.  Let $g=a_1\cdots a_s$ with $a_i\in \Sigma$. Then $\phi(a_i)=\psi(a_i)Z(H)$ by definition of $\psi$, so $\psi(g)Z(H)=\phi(g)=hZ(H)$.
  Then $h=\psi(g)c$ for $c\in Z(G)=\gen{\Delta}$. Writing $c=c_1\cdots c_t$ with $c_i\in\Delta$,  $k=y_{c_1}\cdots y_{c_t}\in \gen{Y}\leq F(\widetilde\Sigma)$ satisfies $\psi(k)=c$.
  Thus $h=\psi(gk)$ proving that $\psi$ is surjective.

Now, it  remains to show that $\psi$  induces a homomorphism $\widetilde G \to H$. 
For each $r\in R$,  $\psi(r  x_r^{-1} )=\psi(r)\psi(r)^{-1}=1$ by definition of $\psi$.
For each $x_r\in X$,  $\phi(r)=1$ so $\psi(x_r)=\psi(r)\in Z(H)$, so $[a,x_r]\in \ker(\psi)$ for any $a\in \widetilde \Sigma$, and clearly $[a,y]\in \ker(\psi)$ for each $y_c\in Y$ since $\psi(y_c)=c\in \Delta\subseteq Z(H)$.
Thus $\langle\widetilde R\rangle\subseteq \ker(\psi)$, so by 
\cref{lem:univQuotient} there is a 
homomorphism from $\widetilde G\cong F(\widetilde \Sigma)/\langle\widetilde R\rangle$ to $H$. 
\end{proof}

\begin{corollary}
For $n$ even and not a power of $2$, the problem 
    $\Epi{\Arb}{D_{2n}}$ in \NP-hard.
\end{corollary}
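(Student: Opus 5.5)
Write $n = 2^e m$ with $m>1$ odd and $e\geq 1$; this is possible because $n$ is even and not a power of $2$. The plan is to climb from the odd case to $D_{2n}$ one factor of $2$ at a time. At each step we use \cref{lem:QuotientCenter} together with the standard fact that $D_{4k}/Z(D_{4k})\cong D_{2k}$ for $k\geq 2$.

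\emph{Base case.} By \cref{thm:mainODD}, $\Epi{\Arb}{D_{2m}}$ is \NP-hard, since $m>1$ is odd.

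\emph{Centre computation.} For $k\geq 2$, write $D_{4k}=\Gen{s,t}{s^2=t^{2k}=1,\, sts=t^{-1}}$ and determine its centre. A reflection $st^r$ does not commute with $t$, because $t\,st^r = st^{r-1}$ while $st^r\,t = st^{r+1}$, and these differ since $t^2\neq 1$ when $2k\geq 3$. A rotation $t^\ell$ commutes with $s$ exactly when $t^{2\ell}=1$, that is, when $\ell\in\{0,k\}$. Hence $Z(D_{4k})=\{1,t^k\}$, which is finite and in particular finitely generated.

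\emph{Identifying the quotient.} In $D_{4k}/\gen{t^k}$ the images of $s$ and $t$ satisfy $s^2=t^k=1$ and $sts=t^{-1}$. The quotient has order $2k$. By von Dyck's lemma (\cref{lem:vonD}) there is a surjection $D_{2k}\to D_{4k}/Z(D_{4k})$, and since both groups have order $2k$ it is an isomorphism. Also, $D_{4k}$ is finite, so it lies in $\Arb$.

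\emph{Induction.} Apply \cref{lem:QuotientCenter} with $H=D_{2^{j+1}m}$ for $j=1,\dots,e$; here $k=2^{j-1}m\geq 3$. This shows that \NP-hardness of $\Epi{\Arb}{D_{2^j m}}$ implies \NP-hardness of $\Epi{\Arb}{D_{2^{j+1}m}}$. After $e$ steps we reach $D_{2^{e+1}m}=D_{2n}$.

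Each step is a polynomial-time reduction and $e$ is a fixed constant, so the composite reduction is still polynomial. Alternatively, one can avoid composing reductions by carrying \NP-hardness forward at each step. I expect no step to be a serious obstacle. The only point needing care is the centre computation, specifically checking that the hypothesis $k\geq 2$ (in fact $k\geq 3$ in every step used here) excludes the degenerate small cases.
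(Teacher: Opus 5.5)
Your proposal is correct and follows the paper's proof: compute $Z(D_{2n})=\{1,t^{n/2}\}$, identify $D_{2n}/Z(D_{2n})$ with $D_n$, and induct on the power of $2$ in $n$, using \cref{lem:QuotientCenter} for each step and \cref{thm:mainODD} for the base case. You also spell out the indexing of the induction and the von Dyck and order-counting argument for the quotient isomorphism, both of which the paper leaves implicit.
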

\begin{proof}

Let $r\in[0,n-1]$.
Then $[t,st^r]=tst^rt^{-1}t^{-r}s=t^{2}\neq 1$ since $n>2$, so $st^r$ is not in the center $Z(D_{2n})$.
Also $[t^r,s]=t^{2r}=1$ if and only if  $r=0$ or $\frac{n}2$.
Therefore 
$Z(D_{2n})=\{1,t^{\nicefrac{n}2}\}$, so $D_{2n}/Z(D_{2n})\cong D_n$ and the  result then follows  from 
\cref{lem:QuotientCenter,thm:mainODD} by induction.
\end{proof}

This completes the proof of  \cref{thm:MainDihedral}.
    As noted above, $D_{2n}$ is nilpotent if and only if $n$ is a power of $2$. 
    It is unclear whether \cref{thm:MainDihedral} extends to these, with the smallest example being $D_8$. 
   \begin{open}\label{qn:Deight}
  Is  $\Epi{\Arb}{D_8}$ \NP-hard, in \P, or otherwise? 
 \end{open}

In \cite{ESW-virtAbel} the authors prove that epimorphism for $\mathcal T$ the class of direct products of the form $A\times B$ with $A$ finitely presented  abelian and $B$ finite, $\Epi{\Arb}{\cT}$
is in \NP. It follows that  $\Epi{\Arb}{A\times B}$ 
is in \NP. 
Combining this with \cref{lem:QuotientCenter} we obtain the following.

\begin{corollary}[Direct product of trivial center with abelian]\label{lem:AbelianTimesNoCenter}
    Let  $A$ be a finitely generated abelian group  and $B$ a finite group with trivial center. If   $\Epi{\Arb}{B}$ is \NP-hard
   then $\Epi{\Arb}{B\times A}$ is \NP-complete.
\end{corollary}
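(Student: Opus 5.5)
Two things need to be shown: that $\Epi{\Arb}{B\times A}$ lies in \NP, and that it is \NP-hard. Membership in \NP comes straight from the result of \cite{ESW-virtAbel} quoted just above. That result says $\Epi{\Arb}{\cT}$ is in \NP for $\cT$ the class of direct products of a finitely presented abelian group with a finite group. A finitely generated abelian group $A$ is finitely presented, so $B\times A\in\cT$. Fixing the target $B\times A$ therefore gives a subproblem of a problem in \NP: the input is just the domain presentation, and we supply a fixed description of $B\times A$. Hence $\Epi{\Arb}{B\times A}\in\NP$.

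For \NP-hardness I will apply \cref{lem:QuotientCenter} with $H=B\times A$. This requires three checks.

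First, $H\in\Arb$. This holds because $B$ is finite, $A$ is finitely presented, and a direct product of finitely presented groups is finitely presented: take the union of the two presentations and add commutator relations between the two generating sets.

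Second, compute the centre. The centre of a direct product is the product of the centres, so
\[ Z(B\times A)=Z(B)\times Z(A)=\{1\}\times A. \]
Here $Z(B)=\{1\}$ by hypothesis and $Z(A)=A$ since $A$ is abelian. In particular $Z(H)\cong A$ is finitely generated, which is the hypothesis \cref{lem:QuotientCenter} needs.

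Third, identify the quotient:
\[ H/Z(H)=(B\times A)/(\{1\}\times A)\cong B. \]
Since $\Epi{\Arb}{B}$ is \NP-hard by assumption, so is $\Epi{\Arb}{H/Z(H)}$. \Cref{lem:QuotientCenter} then gives that $\Epi{\Arb}{B\times A}$ is \NP-hard.

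The only point needing care is the isomorphism $H/Z(H)\cong B$. The problem $\Epi{\Arb}{\cdot}$ with a fixed target depends only on the isomorphism type of the target, since composing with an isomorphism preserves surjectivity. So hardness for $B$ transfers to the quotient $H/Z(H)$. Everything else is routine, and combining membership with hardness gives \NP-completeness.
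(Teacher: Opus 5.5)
Your proposal is correct and follows the paper's proof exactly. Membership in \NP comes from \cite{ESW-virtAbel}, and \NP-hardness comes from \cref{lem:QuotientCenter} applied to $H=B\times A$, using $Z(B\times A)=\{1\}\times A$ and $(B\times A)/Z(B\times A)\cong B$; the extra checks you spell out (finite presentability of $H$, finite generation of $Z(H)$, and invariance under isomorphism of the target) are hypotheses the paper leaves implicit.
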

\begin{proof}
    Since $Z(B)$ is trivial, $(A\times B)/Z(A\times B)$  is isomorphic to $B$, so  $\Epi{\Arb}{A\times B}$ is \NP-hard by \cref{lem:QuotientCenter},  and is in \NP by \cite{ESW-virtAbel}.
\end{proof}

 So, for example,  epimorphism onto $A\times A_m$ or $A\times D_{2n}$ for  $A$ finitely generated abelian, $m\geq 5$ and $n>1$ odd, are  \NP-complete.

\section{Brief survey of known results}
\label{sec:otherResults}

This section 
 collects together some known facts and  observations to add to the list of cases for which bounds
 on the complexity of the epimorphism problem can be given.

\subsection{Epimorphism onto free groups}
\label{subsec:FreeTargets}

Recall the notation for systems of equations from Subsection~\ref{subsec:Notation}.
Let $m,n,d\in \N_+$, $\bbX = \{X_1,X_1^{-1},\dots,X_n,X_n^{-1}\}$, $F_d$ be a free group of rank $d$ with identity element $1\in F_d$,  and 
  $ \left( u_i\right)_{[1,m]}$
a system of equations without constants over $F_d$, so   
 each $u_i=u_i(\bbX)\in \bbX^*$.
Define the \emph{rank} of a solution $\sigma\colon \bbX\to F_d$ to 
 be the rank of the free subgroup $\langle \sigma(X_1),\dots, \sigma(X_n)\rangle$ of $F_d$. Since $\{X_i\mapsto 1\}$ is a solution to any system without constants, the rank of a solution $\sigma$ is well defined. Moreover it is at least $0$  and at most the number of variables $n$.
Define the \emph{rank} of the system without constants $(u_i)_{ i\in[1,m]}$  to be the maximum rank over all solutions.

As an example, suppose $u=X^{-1}Y^{-1}XYZ^{s}$ is a system of one equation over $F_d$. 
By \cite{Schutzenberger} (see \cite[page 51]{LS}) 
the only solutions are $\{X\mapsto v^i, Y\mapsto v^j, Z\mapsto 1\}$ for some $v\in F_d$, so 
the rank of this equation is $1$.

\begin{theorem}[{Razborov \cite[Theorem 3]{Razborov1985}}]\label{thm:Raz}
    Let $d$ be a fixed integer.  
    Given a system of equations $ \left( u_i\right)_{[1,m]}$ without constants over a free group $F_d$, there is an algorithm which 
    computes the {rank} of the system. 
\end{theorem}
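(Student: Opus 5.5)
The plan is to translate the rank of a system into a statement about the coefficient-free group it defines, and then compute that through a finite, effectively constructible description of all its solutions.

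\emph{Step 1: reformulate the rank.} Put $G_u=\Gen{X_1,\dots,X_n}{u_1,\dots,u_m}$. By \cref{lem:vonD}, solutions $\sigma\colon\bbX\to F_d$ of $(u_i)_{[1,m]}$ are exactly the homomorphisms $G_u\to F_d$. The rank of $\sigma$ is the rank of the image $\sigma(G_u)$, which is free by Nielsen--Schreier and has rank at most $n$. The case $d=1$ can be handled separately, since $F_1=\Z$ and the question reduces to linear algebra over $\Z$. For $d\geq 2$, every $F_r$ embeds in $F_d$. Hence the rank of the system equals the largest $r\le n$ for which $G_u$ admits an epimorphism onto $F_r$, and in particular it does not depend on $d$. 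So it suffices to give an algorithm that, given $G_u$, finds the largest free group it maps onto.

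\emph{Step 2: make the solution set effective.} I would invoke Razborov's refinement of Makanin's algorithm, namely the Makanin--Razborov diagram. For a coefficient-free system one can effectively construct a finite rooted tree of finitely presented quotients $G_u\twoheadrightarrow L_1\twoheadrightarrow\cdots$ such that every solution factors through some branch. Each solution along a branch is obtained by composing canonical automorphisms (Dehn twists in the JSJ-type decompositions of the intermediate groups) with the quotient maps, and each branch ends at a free group $F_{r_j}$ of explicitly known rank. Along a branch, the images of these homomorphisms are governed by the terminal free group. Taking the terminal specialisation to be an embedding, and using that the automorphisms preserve rank, one checks that the maximal rank of a solution factoring through that branch is exactly $r_j$. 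The rank of the system is then $\max_j r_j$, which is computed once the diagram is built.

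\emph{Main obstacle.} The hard part is the effective construction in Step 2: proving that Makanin's process terminates and that the tree of fundamental sequences is finite and algorithmically constructible. This is the substance of Razborov's thesis. Since none of this machinery is developed earlier in the paper, I would cite it rather than reprove it. An alternative I would try first is the more modern route via Kharlampovich--Myasnikov: compute effectively the finitely many maximal fully residually free quotients of $G_u$, and recurse on these, since for a fully residually free group the maximal free quotient rank can be read off from its (computable) JSJ/abelian decomposition and terminates by descending chain arguments. The remaining steps are routine once either version of this effectiveness result is accepted.
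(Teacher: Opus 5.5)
Your proposal is correct and follows the same route as the paper, which gives no argument of its own beyond citing Razborov and noting that his algorithm works by constructing Makanin--Razborov diagrams. Your Step 1 (for $d\geq 2$ the rank is the largest $r$ with $G_u\twoheadrightarrow F_r$, independent of $d$) and the fact that the composite $G_u\to F_{r_j}$ along each branch is surjective are what let you read off the rank as $\max_j r_j$ once the effective diagram is granted; the relevant point is this surjectivity, not merely that automorphisms preserve rank.
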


Note that Razborov's algorithm runs via constructing ``Makanin-Razborov diagrams'', and the complexity to compute these are not known, most likely at least doubly exponential space (see for example \cite[page 2]{VolkerLothaire}).

\begin{lemma}\label{lem:epiFree}
    Let $\cT$ be a class consisting of finite rank free groups. The problem 
    $\Epi{\Arb}{\cT}$ is decidable.
\end{lemma}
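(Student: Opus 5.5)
The plan is to reduce to Razborov's \cref{thm:Raz}, treating an epimorphism from $G=\Gen{g_1,\dots,g_n}{r_1,\dots,r_m}$ onto $F_d$ as a solution of the constant-free system $(r_i(X_1,\dots,X_n))_{[1,m]}$ over $F_d$. By \cref{lem:vonD}, homomorphisms $G\to F_d$ are exactly the solutions $\sigma$ of this system, via $g_j\mapsto\sigma(X_j)$. The image of such a homomorphism is the subgroup $\gen{\sigma(X_1),\dots,\sigma(X_n)}$, which is free by Nielsen--Schreier. Its rank is exactly the rank of the solution as defined above.

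The key observation is the following equivalence: there is an epimorphism $G\twoheadrightarrow F_d$ if and only if the system has rank at least $d$.
\begin{itemize}
\item If $\psi$ is surjective, the corresponding solution has image $F_d$, so it has rank $d$.
\item Conversely, suppose some solution $\sigma$ has rank $e\geq d$, so its image $K\leq F_d$ is free of rank $e$.
\item Choose an epimorphism $K\cong F_e\twoheadrightarrow F_d$, for instance by killing $e-d$ basis elements, and compose it with the homomorphism $G\to K$. This gives a surjection $G\twoheadrightarrow F_d$.
\end{itemize}
One small point to check is that the image of $G$ in $K$ is all of $K$; this holds because $K$ was defined as that image.

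The algorithm is then: on input a finite presentation of $G$ and the rank $d$ (given in binary), do the following.
\begin{itemize}
\item If $d>n$, answer no, since a rank-$d$ free group needs at least $d$ generators.
\item Otherwise $d$ is a fixed finite number. Run Razborov's algorithm from \cref{thm:Raz} over $F_d$ to compute the rank of the system, and answer yes exactly when this rank is $\geq d$.
\end{itemize}
The case $d=0$ (the trivial group) is immediate.

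The main obstacle is a subtlety in how \cref{thm:Raz} is phrased: $d$ is treated as fixed, while here $d$ is part of the input. I would handle this by noting that Razborov's procedure is uniform in $d$, so it can be run for each given $d$. Alternatively, the answer does not depend on the ambient rank. Any free group of rank at least $2$ contains free subgroups of every finite rank, so a rank-$e$ solution exists in $F_2$ if and only if one exists in $F_d$ for every $d\geq 2$. Hence it suffices to compute the rank once over $F_2$, or over $F_1=\Z$ when $d=1$. This removes any uniformity concern, since one computation of the rank decides the problem for all targets in $\cT$.
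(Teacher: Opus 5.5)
Your proposal is correct and follows the paper's proof: you encode homomorphisms $G\to F_d$ as solutions of the constant-free system $(r_j(X_1,\dots,X_n))_{[1,m]}$, compute its rank with \cref{thm:Raz}, answer yes exactly when the rank is at least $d$, and handle the converse with the same argument of killing the extra basis elements. Your additional observation is a useful refinement that the paper leaves implicit: the rank of the system is the same over every free group of rank at least $2$, so a single computation over $F_2$ suffices and the algorithm is genuinely uniform in the input $d$.
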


    \begin{proof}	
    Assume the input is a finite presentation $\Gen{g_1,\dots,g_n}{r_1,\dots,r_m}$ for a group $G \in \Arb$, and $d \in \N_+$ specifies a target free group of rank $d$.
    Let $\cG= \{g_1,g_1^{-1},\dots,g_n,g_n^{-1}\}$ and $\bbX=\{X_1,X_1^{-1},\dots,X_n,X_n^{-1}\}$.
    Perform the following procedure:
    \begin{enumerate}
    \item Fix a free group $H=\gen{a_1,\dots,a_d}$.
    \item Let $\lambda\colon g_i \mapsto X_i,  g_i^{-1} \mapsto X_i^{-1}$ induce a monoid homomorphism from words over $\cG$ to words over $\bbX$. 
    Then  $(\lambda(r_j))_{[1,m]}$
    is a system of equations without constants over $F_d$ with each $\lambda(r_j)\in\bbX^*$. 
    \item Apply \cref{thm:Raz} with input $(\lambda(r_j))_{[1,m]}$ over the group $H$, to compute the rank $\MFr\in[0,n]$ of the system.
    If $\MFr\geq d$, return `Yes', else return `No'.
    \end{enumerate}

    The justification for the procedure is as follows. Let $h_1,\dots, h_n \in H$. By \cref{lem:vonD}, ${\{X_i\mapsto h_i, X_i^{-1}\mapsto h_i^{-1}\mid i \in [1,n]\}}$ is a solution to $(\lambda(r_j))_{[1,m]}$
    if and only if the map induced by $\{g_i\mapsto h_i\mid i \in [1,n]\}$ is a homomorphism from $G$ to $H$. If $\MFr\geq d$ 
    then there is a homomorphism $\kappa$ from $G$  onto a subgroup $K$ of $H$ such that $K$ is free of rank $\MFr$. 
    Then $K$ has some free basis, say $\{y_1,\dots, y_{\MFr}\}$ (which we do not have to compute) and there exists a subgroup $K'=\gen{y_1,\dots,y_d}$ of $H$ and map   $\tau\colon K \to K'$ defined by $\tau(y_i)=\begin{cases}
       y_i & i\leq d\\
       1 & i>d
    \end{cases}$. Then $\tau\circ \kappa$ is an epimorphism from $G$ to $K'$, and by definition $K'$ is free of rank $d$. (Note that the procedure finds an epimorphism to some free group of rank $d$, not necessarily to the original group  $H$.)  
    If $\MFr<d$, then there is no epimorphism since an epimorphism $\psi\colon G\to H$ would be a solution to the system of rank $d$.   
\end{proof}

\begin{open}
    Determine bounds for the complexity of computing the rank of a system of equations $ \left( u_i\right)_{[1,m]}$ without constants over a free group $F_d$. For  example can this be done in \EXPSPACE or \PSPACE?
\end{open}

\subsection{Epimorphism onto non-abelian finite simple groups}\label{subsec:Kuperberg}

Let $\Hom$ be the set of all triangulated homology 3–spheres, given by finite triangulations. From a finite triangulation of a 3-manifold $M$ one can write down (in linear time) a finite presentation for the fundamental group $\pi_1(M)$ of the manifold. Let $H$ be a fixed, finite, non-abelian simple group. Kuperberg and Samperton \cite[Corollary 1.2]{Kuperberg} prove the following problem is \NP-complete.

\compproblem[]{$M\in\Hom$ and the promise that every non-trivial homomorphism from $ \pi_1(M)$ to $H$ is surjective}{Is there a non-trivial homomorphism from $ \pi_1(M)$ to $H$?}

It follows immediately that $\Epi{\Arb}{H}$ is \NP-hard, via the following reduction. On input a finite triangulation for $M$ and the promise that every non-trivial homomorphism from $ \pi_1(M)$ to $H$ is surjective, obtain in linear time a presentation for $\pi_1(M)$. Then there is an epimorphism from  $\pi_1(M)$ to $H$ if and only if there is a non-trivial homomorphism from  $\pi_1(M)$ to $H$.

This gives Item~(2) of \cref{prop:ObserveFacts}.

\subsection{Epimorphism onto abelian groups}\label{subsec:AbelianCase}

Let $\Abe$ denote the class of finitely generated abelian groups. In this subsection we show that  
$\Epi{\Arb}{\Abe}$ is in \P. 

\medskip\noindent
 \emph{Notation.} For a group $G$, $G_{ab} = G/[G:G]$ is the \emph{abelianisation} of $G$. Note that if $\Gen{S}{R}$ is a presentation 
for $G$ then $\Gen{S}{R\cup\{[s,t]:s,t\in S\}}$ is a presentation for $G_{ab}$, which can clearly be obtained in time quadratic in $|S|$. 
Recall that $C_a$ denotes the cyclic group of order $a\in \N_+\cup\{\infty\}$. 
By the well-known fundamental theorem of finitely generated abelian groups, 
if $G$ is a finitely generated abelian group then
\begin{equation*}
    G \cong (C_\infty)^{d} \times C_{a_1} \times \cdots \times C_{a_k}
\end{equation*}
for some $d, a_1,\dots,a_k \in \N_+$ where $a_i \mid a_{i+1}$ for $i\in[1,k-1]$.  Such a description for a finitely generated abelian group will be called its \emph{standard form}.

Let $G = \Gen{\cX}{\cR}$, then  $\kappa\colon x \mapsto x$ for all $x \in \cX$ induces a homomorphism $\kappa\colon G \to G_{ab}$. Moreover $\kappa$ is surjective since each $g\in G_{ab}$ can be represented by a  word  $w \in (\cX\cup\cX^{-1})^\ast$, and the element $g'\in G$ spelled by $w$ satisfies $\kappa(g')=g$. 
It follows that if $\tau\colon G_{ab} \to H$ is an epimorphism to a group $H$, then $\psi\colon G \to H$ defined by $\psi(g) = \tau(\kappa(g))$ for all $g \in G$ is an epimorphism. When $H$ is abelian, there is a stronger statement.

\begin{lemma}\label{lem:abe_epi_iff_abeabe_epi}
    Let $G \in \Arb$ and $H \in \Abe$. There exists an epimorphism $\psi\colon G \to H$ if and only if there exists an epimorphism $\varphi\colon G_{ab} \to H$. 
\end{lemma}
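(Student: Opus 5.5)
The plan is to prove both implications directly, using the universal property of the abelianisation, which is essentially \cref{lem:univQuotient} applied with $N=[G,G]$.

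For the implication ``if'', suppose $\varphi\colon G_{ab}\to H$ is an epimorphism. As noted just before the statement, the natural map $\kappa\colon G\to G_{ab}$ induced by $x\mapsto x$ for $x\in\cX$ is surjective. Hence $\psi=\varphi\circ\kappa$ is a composition of surjective homomorphisms, so it is an epimorphism $G\to H$. This direction does not use that $H$ is abelian.

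For the implication ``only if'', suppose $\psi\colon G\to H$ is an epimorphism with $H$ abelian. I will show that $[G,G]\subseteq\ker\psi$. For any $g,h\in G$ we have $\psi([g,h])=[\psi(g),\psi(h)]=1_H$ because $H$ is abelian. Since $[G,G]$ is generated by commutators and $\ker\psi$ is a subgroup, it follows that $[G,G]\subseteq\ker\psi$. The subgroup $[G,G]$ is normal in $G$, so \cref{lem:univQuotient} applies with $N=[G,G]$. It gives a homomorphism $\widetilde\psi\colon G/[G,G]=G_{ab}\to H$ with $\widetilde\psi\circ\pi=\psi$, and this $\widetilde\psi$ is surjective because $\psi$ is.

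One point needs care. The paper describes $G_{ab}$ concretely by the presentation $\Gen{\cX}{\cR\cup\{[s,t]\}}$ rather than as $G/[G,G]$. I will check that these agree: the normal closure of the images of the $[s,t]$ in $G$ equals $[G,G]$, since the quotient by that normal closure is abelian. Alternatively, I can avoid the identification by applying \cref{lem:vonD} directly. The map $x\mapsto\psi(x)$ kills every relator in $\cR$ because $\psi$ is a homomorphism, and it kills every $[s,t]$ because $H$ is abelian. So it defines a homomorphism on the presented group $G_{ab}$, and this homomorphism is onto because its image contains $\psi(\cX)$, which generates $H$. I expect this identification to be the only mildly delicate step, and I would use the von Dyck argument as the cleaner route.
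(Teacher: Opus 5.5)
Your proof is correct. The ``if'' direction is exactly the paper's argument: composing with the surjection $\kappa\colon G\to G_{ab}$, as in the paragraph just before the statement. The paper leaves the ``only if'' direction unstated, and your factorisation of $\psi$ through $G/[G,G]$ via \cref{lem:univQuotient} (or equivalently via \cref{lem:vonD} on $\Gen{\cX}{\cR\cup\{[s,t]\}}$, with your correct check that the two descriptions of $G_{ab}$ agree) is the standard argument that fills it in.
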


The following technical lemma that allows us to check if an epimorphism exists between two finitely generated abelian  groups given in standard form. 

\begin{lemma}\label{lem:epi_abe_tech_lem}
     Let 
    \begin{equation*}
        G \cong (C_\infty)^d \times C_{a_1} \times \cdots \times C_{a_s}
\  \textnormal{ and }\
        H \cong C_{c_1} \times \cdots \times C_{c_t}
    \end{equation*}
    with $s,t,a_i,c_j \in \N_{+}$ such that $c_{i+1} \mid c_{i}$ and $a_{i+1} \mid a_i$. There exists an epimorphism $\psi\colon G \to H$ if and only if $s \geq t-d$ and $c_{d+i} \mid a_i$ for $i \in [1,t-d]$.
\end{lemma}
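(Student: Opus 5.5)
The plan is to prove both directions directly: sufficiency by building an explicit map factor by factor, and necessity by reducing modulo a suitable integer and counting ranks one prime at a time. Throughout I assume the standard-form convention that each $c_j>1$. If some $c_j=1$ were allowed, those trivial factors could simply be deleted from $H$ first; I also read the divisibility condition as vacuous when $t\le d$.

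\emph{Sufficiency.} Suppose $s\ge t-d$ and $c_{d+i}\mid a_i$ for $i\in[1,t-d]$. I define $\psi$ on the obvious generators and extend it to a homomorphism, which is legitimate because $G$ is a direct product of cyclic groups.
\begin{itemize}
\item For $j\le\min(d,t)$, send the $j$-th generator of $(C_\infty)^d$ to a generator of $C_{c_j}$.
\item For $i\in[1,t-d]$, send the generator of $C_{a_i}$ to a generator of $C_{c_{d+i}}$. This is well defined exactly because $c_{d+i}\mid a_i$, and it uses $s\ge t-d$ so that enough factors $C_{a_i}$ exist.
\item Send all remaining generators to $1$.
\end{itemize}
Every cyclic factor of $H$ is hit by a generator, so $\psi$ is surjective.

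\emph{Necessity, reduction step.} Let $\psi\colon G\to H$ be an epimorphism, fix $i\in[1,t-d]$, and put $k=d+i$ and $m=c_k$. Since $c_k$ divides each of $c_1,\dots,c_k$, projecting $H$ onto its first $k$ factors and then reducing mod $m$ gives an epimorphism $H\twoheadrightarrow (C_m)^k$. Composing with $\psi$, and using that $mG$ maps into $m(C_m)^k=0$, yields an epimorphism
\[
\overline G:=G/mG\cong (C_m)^d\times C_{\gcd(a_1,m)}\times\cdots\times C_{\gcd(a_s,m)} \twoheadrightarrow (C_m)^k .
\]
Now fix a prime $p$ with $p^e\,\|\,m$ and $e\ge1$, and pass to Sylow $p$-subgroups; the map stays surjective. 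So $(C_{p^e})^k$ is a quotient of $(C_{p^e})^d\times\prod_j C_{p^{\min(e,v_p(a_j))}}$.

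\emph{Necessity, rank count.} Apply multiplication by $p^{e-1}$ and then tensor with $\mathbb F_p$; both operations preserve surjectivity. Every surviving factor becomes $C_p$, so comparing $\mathbb F_p$-dimensions gives
\[
k\le d+\#\{j : v_p(a_j)\ge e\},
\]
that is, at least $i$ indices $j$ satisfy $v_p(a_j)\ge e$. Because $a_{j+1}\mid a_j$, the valuations $v_p(a_j)$ are non-increasing in $j$, so in particular $v_p(a_i)\ge e$. Ranging over all primes $p$ gives $c_{d+i}\mid a_i$. Taking $i=t-d$, and using $c_t>1$ to get at least one prime, the same count shows there are at least $t-d$ indices $j$, hence $s\ge t-d$.

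The main obstacle is the rank count: one has to pass to $p^{e-1}(\cdot)\otimes\mathbb F_p$ correctly and then invoke the divisibility ordering of the $a_j$ so that the count of large factors pins down the specific factor $a_i$, rather than just some $a_j$. I would check carefully there that surjectivity really survives each functor, which it does because both are right exact on surjections.
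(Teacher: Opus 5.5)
Your proof is correct. The paper gives no argument for this lemma and defers to \cite{Jerrythesis}, so there is no route in the text to compare against. Your argument is a complete, self-contained proof of the criterion: the explicit factor-by-factor construction gives sufficiency. For necessity you reduce modulo $c_{d+i}$, pass to $p$-primary parts, and count the factors of exponent at least $e$ after applying $p^{e-1}(\cdot)$.

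\textbf{Convention $c_j>1$.} Your standing assumption $c_j>1$ is not merely a convention; the statement as literally written (with $c_j\in\N_+$) needs it. For $G=C_2$ (so $d=0$, $s=1$, $a_1=2$) and $H=C_1\times C_1$ (so $t=2$), the trivial map is an epimorphism, yet $s\ge t-d$ fails.

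\textbf{Order of conclusions.} In the rank count, the inequality $\#\{j\in[1,s] : v_p(a_j)\ge e\}\ge i$ is what guarantees $s\ge i$, i.e.\ that $a_i$ exists at all. Record this before concluding $v_p(a_i)\ge e$; then $s\ge t-d$ is simply the case $i=t-d$.

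\textbf{Redundant step.} Tensoring with $\mathbb F_p$ adds nothing. After multiplying by $p^{e-1}$ both groups are already elementary abelian, so comparing orders suffices.
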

See~\cite{Jerrythesis} for a proof.
\begin{theorem}[Computing Smith normal form; \cite{kannan1979}]
    \label{thm:snf_poly} The following computational problem is in \P. 

    \compalgo{a matrix $A\in 
\Z^{m\times n}$}{compute $K \in \GL(m,\Z), L \in \GL(n,\Z), D\in \Z^{m\times n}$ such that 
$A=KDL$, \begin{equation*}
        D = \begin{pmatrix}
            \begin{array}{c|c}
                M & 0 \\
                \hline
                0 & 0
            \end{array} 
            \end{pmatrix},
        M = \begin{pmatrix}
            \MFd_1 & 0 & \cdots & 0 \\
            0 & \MFd_2 & \cdots & 0 \\
            \vdots & \vdots & \ddots & \vdots \\
            0 & 0 & \cdots & \MFd_r
        \end{pmatrix}
    \end{equation*}
       with $0 \leq r \leq \min(m,n)$,  each $\MFd_i > 0$ 
        and $\MFd_i \mid \MFd_{i+1}$ for all $i \in [1,r-1]$. }
\end{theorem}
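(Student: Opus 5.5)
The statement just above is the cited result of Kannan and Bachem. We do not reprove it from scratch; the proof proposal is to recall the structure of their polynomial-time algorithm and explain why it runs in polynomial time.

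The plan is to compute the Smith normal form by the classical elimination scheme: alternate row and column operations over $\Z$ to clear the first row and column, then recurse on the remaining submatrix. Throughout, we record the unimodular transformations, which accumulate into $K^{-1}$ and $L^{-1}$. The naive version of this procedure is correct but exponential, because intermediate entries can grow without bound. So the real content of the argument is controlling coefficient growth.

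Following Kannan--Bachem, I would proceed in three steps.
\begin{enumerate}
\item First compute a Hermite normal form by processing leading principal minors one at a time. Adding row $i+1$, we reduce it against the previous rows using extended gcd computations. After each step, we reduce the off-diagonal entries modulo the diagonal pivots.
\item Then alternate Hermite normal forms of $A$ and $A^{T}$ (row then column form) until the matrix is diagonal.
\item Finally, enforce the divisibility $\MFd_i\mid\MFd_{i+1}$ by the $2\times 2$ gcd/lcm trick on pairs of diagonal entries.
\end{enumerate}
Each of these steps consists of polynomially many arithmetic operations.

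The main obstacle is bounding the bit-length of all intermediate numbers, including the entries of the accumulated $K$ and $L$. For the Hermite step, the key is the modular reduction of off-diagonal entries. It keeps every entry bounded by the product of the pivots, and this product is in turn bounded by a minor of $A$. Hadamard's inequality then bounds such a minor by a quantity of polynomial bit-size. For the extended gcd multipliers, we choose them of minimal size, bounded by the inputs. The alternation in the second step terminates after polynomially many rounds, because each nontrivial round strictly decreases the first pivot in the divisibility order. For the transformation matrices, we argue similarly: each of $K,L$ can be expressed through adjugates of submatrices of $A$, so their entries obey the same determinantal bounds. I would write out the Hermite-form bound carefully and cite \cite{kannan1979} for the remaining bookkeeping.
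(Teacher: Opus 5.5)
Your proposal matches the paper, which gives no argument of its own and simply invokes Kannan--Bachem \cite{kannan1979}; your outline (Hermite form built over leading principal submatrices with modular reduction, alternating row and column Hermite forms until diagonal, then fixing the divisibility chain) is a reasonable summary of why that result holds, and you correctly identify controlling the matrix entries as the real content. One caution if you write it out: the claim that $K$ and $L$ are ``expressed through adjugates of submatrices of $A$'' is not justified as stated, but you do not need it, since $K$ and $L$ are products of polynomially many elementary unimodular matrices whose entries are bounded by the already-controlled matrix entries, and the bit-size of a product of $N$ integer $n\times n$ matrices is at most the sum of their bit-sizes plus $N\log_2 n$.
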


Using \cref{thm:snf_poly}, on input finite presentations for $G_1$ and $G_2$, we can compute integers  $s,t,a_i,c_j \in \N_{+}$ such that $c_{i+1} \mid c_{i}$, $a_{i+1} \mid a_i$ and 
 \begin{equation*}
        G_1 \cong  (C_\infty)^{d_1} \times C_{a_1} \times \cdots \times C_{a_s}
  \ \textnormal{ and }\
        G_2 \cong  (C_\infty)^{d_2} \times C_{c_1} \times \cdots \times C_{c_t}.
    \end{equation*}
     We require the free abelian rank of $G_1$ to be at least that of $G_2$ for an epimorphism to be possible. If so, applying \cref{lem:epi_abe_tech_lem} to \begin{equation*}
         (C_\infty)^{d_1-d_2} \times C_{a_1} \times \cdots \times C_{a_s}
  \ \textnormal{  and  }\
     C_{c_1} \times \cdots \times C_{c_t}
    \end{equation*}
    we can decide  $\Epi{\Abe}{\Abe}$  in \P.
    Putting this all together we obtain
\begin{lemma}\label{lem:ABEL-P} Let $\cT$ be a class of finitely generated abelian groups. Then 
    $\Epi{\Arb}{\cT}$ is in \P, where the input for both domain and target groups are finite presentations. 
\end{lemma}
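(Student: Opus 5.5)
The plan is to assemble three ingredients that are already available: Lemma~\ref{lem:abe_epi_iff_abeabe_epi}, the Smith normal form algorithm of Theorem~\ref{thm:snf_poly}, and the criterion of Lemma~\ref{lem:epi_abe_tech_lem}. On input presentations $\Gen{\cX}{\cR}$ for the domain $G$ and $\Gen{\cX'}{\cR'}$ for the target $H\in\cT$, Lemma~\ref{lem:abe_epi_iff_abeabe_epi} says it suffices to decide whether there is an epimorphism $G_{ab}\to H$. Both groups are then abelian, and the task becomes the $\Epi{\Abe}{\Abe}$ problem.

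\textbf{Step 1: relation matrices.} For $G_{ab}$, I build the relation matrix $A\in\Z^{|\cR|\times|\cX|}$. Its row for $r\in\cR$ records, for each $x\in\cX$, the exponent sum of $x$ in $r$. The commutator relators can be dropped, since they only contribute zero rows. I do the same for $H$, giving a matrix $B$. Each entry is bounded in absolute value by the length of the corresponding relator, so these matrices have polynomial bit-size in the unary input and are computed in polynomial time.

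\textbf{Step 2: standard forms.} I apply Theorem~\ref{thm:snf_poly} to $A$ and to $B$. From $A=KDL$ with $K,L$ unimodular, $G_{ab}\cong\Z^{|\cX|}/\mathrm{row}(A)\cong(C_\infty)^{|\cX|-r}\times C_{\MFd_1}\times\cdots\times C_{\MFd_r}$. I discard the factors with $\MFd_i=1$ and reverse the order of the rest to match the divisibility convention $a_{i+1}\mid a_i$ of Lemma~\ref{lem:epi_abe_tech_lem}. This yields $d_1$ and $a_1,\dots,a_s$, and likewise $d_2$ and $c_1,\dots,c_t$ for $H$. The main point to check here is that the Smith form entries have polynomially bounded bit-size, so that the later divisibility tests run in polynomial time. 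This is part of the \P guarantee of \cite{kannan1979}, since the output must be written down in polynomial time.

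\textbf{Step 3: deciding the abelian instance.} If $d_1<d_2$, I answer `No'. Indeed, tensoring with $\mathbb{Q}$ turns an epimorphism into a surjection $\mathbb{Q}^{d_1}\to\mathbb{Q}^{d_2}$, which is impossible. Otherwise, an epimorphism $G_{ab}\to H$ exists if and only if one exists from $(C_\infty)^{d_1-d_2}\times C_{a_1}\times\cdots\times C_{a_s}$ onto $C_{c_1}\times\cdots\times C_{c_t}$. The forward direction of this reduction is the main obstacle. I would argue it by pushing $\Z^{d_2}$ into $H$ and quotienting by its image, or more simply by reducing $H$ modulo a suitable prime power and counting the minimal number of generators. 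The backward direction is easy: map $\Z^{d_2}$ identically onto the free part of $H$. I would then check the conditions $s\geq t-(d_1-d_2)$ and the divisibilities $c_{(d_1-d_2)+i}\mid a_i$ of Lemma~\ref{lem:epi_abe_tech_lem}. This takes $O(t)$ divisibility tests on polynomial-size integers, so it runs in polynomial time. Combining the three steps gives a polynomial-time algorithm for $\Epi{\Arb}{\cT}$.
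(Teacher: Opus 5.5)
Your proposal is correct and follows the paper's proof essentially verbatim: pass to $G_{ab}$ via Lemma~\ref{lem:abe_epi_iff_abeabe_epi}, compute Smith normal forms with Theorem~\ref{thm:snf_poly}, reject when $d_1<d_2$, and otherwise apply Lemma~\ref{lem:epi_abe_tech_lem} to $(C_\infty)^{d_1-d_2}\times C_{a_1}\times\cdots\times C_{a_s}$ and $C_{c_1}\times\cdots\times C_{c_t}$; your relation-matrix and bit-size remarks only fill in details the paper leaves implicit.

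The paper does not justify the rank-reduction step at all, and of your two sketches for its forward direction you should use the first, made precise as follows. Compose $\psi\colon G_{ab}\twoheadrightarrow H$ with $H\twoheadrightarrow H/T(H)\cong\Z^{d_2}$, where $T(\cdot)$ denotes the torsion subgroup. This surjection splits, its kernel is $\cong (C_\infty)^{d_1-d_2}\times T(G_{ab})$, and $\psi$ maps that kernel onto $T(H)$. By contrast, merely counting minimal numbers of generators of $H/p^kH$ cannot by itself recover the divisibility conditions, since these counts agree for $C_2$ and $C_4$ at every $p^k$.
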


\begin{proof}
    Let $G \in \Arb$ and $H \in \Abe$. The following procedure solves our problem.
    \begin{enumerate}
        \item Compute a presentation for 
        $G_{ab}$.
        \item If  $\Epi{\Abe}{\Abe}$ on input $G_{ab}$ as the domain group and $H$ as the target group returns `Yes, return `Yes'. Else return `No'.
    \end{enumerate}
    Step~(1) constructs a finite presentation for $G_{ab}$ in polynomial time, and  step~(2) is in \P as described above.
\end{proof}

\subsection{Remaining items in \cref{prop:ObserveFacts}}\label{subsec:Remaining}

To see that  $\Epi{\cD}{\Arb}$ is undecidable when $\cD$ is any class of groups which contains at least one finitely generated group, suppose for contradiction that 
$\Epi{\cD}{\Arb}$ is decidable. On input a finite presentation for a group $H$ presentation, the following algorithm decides whether or not $H$ is trivial (which is undecidable).
Let $G\in \cD$ be a group with $n$ generators, and $K$ the free product of $n+1$ copies of $H$.  Recall Grushko's theorem \cite{Gruschko,Neumann}
which states that $m(A\ast B)=m(A)+m(B)$ where $m(C)$ the  minimum number of generators required to generate a group $C$. Then  $m(K)=(n+1)m(H)$ and $m(G)\leq n$.
It follows that 
there is an epimorphism from $G$ to $K$ if and only if $H$ is trivial.

To see that 
$\Epi{\Arb}{\Arb}$ is recursively~enumerable, recall that finitely presented groups have recursively~enumerable word problem. On input finite presentations for $G,H\in\Arb$ with $G,H$ having generating sets $\cX=\{x_1,\dots, x_m\},\cY$
respectively, enumerate $m$-tuples $\{h_1,\dots, h_m\}$ of elements of $H$ and $n$-tuples of words $\{u_1,\dots, u_n\}\subseteq \cX^*$ and recursively verify \be\item the map $x_i\mapsto h_i$ is a homomorphism by (recursively) solving the word problem in $H$ and \cref{lem:vonD}
\item 
$u_i$ is a preimage of $y_i$ under this map by (recursively) solving the word problem in $H$. 
\ee

\section*{Acknowledgements}
This research was supported by Australian Research Council grant DP210100271. The second author 
was supported by an Australian Government Research Training Program Stipend.
The third author was partially supported by DFG grant WE 6835/1--2.
The first and third author were 
supported through the program \emph{Oberwolfach Research Fellows} at the Mathematisches Forschungsinstitut Oberwolfach in 2023.

The authors wish to thank Michal Ferov, Alex Levine,  Eric Samperton, Robert Tang, Alex Thumm and Kane Townsend 
for fruitful discussions about this project, and sincerely thank
the anonymous reviewer for their careful reading and detailed and helpful feedback which helped shape the present version of this work.

\bibliographystyle{plain}
\bibliography{refs}

\end{document}